\newtheorem{theorem}{Theorem}[section]
\newtheorem{definition}[theorem]{Definition}
\newtheorem{lemma}{Lemma}[section]
\newtheorem{remark}{Remark}[section]
\begin{document}
\title{Universal curvature identities III}
\author{P. Gilkey, J.H. Park, and K. Sekigawa}
\address{PG: Mathematics Department, University of Oregon, Eugene OR 97403 USA}
\email{gilkey@uoregon.edu}
\address{JHP: Department of Mathematics, Sungkyunkwan University, Suwon
440-746, Korea \\\phantom{JHP:..A} Korea Institute for Advanced Study, Seoul
130-722, Korea} \email{parkj@skku.edu}
\address{KS: Department of Mathematics, Niigata University, Niigata, Japan.}
\email{sekigawa@math.sc.niigata-u.ac.jp}
\begin{abstract}{We examine universal curvature identities
for pseudo-Riemannian manifolds with boundary. We determine the
Euler-Lagrange equations associated to the Chern-Gauss-Bonnet
formula and show that they are given solely in terms of curvature
{and the second fundamental form and do not involve covariant
derivatives  thus generalizing a conjecture of Berger to this
context.}
\\MSC 2010: 53B20.
\\Keywords: Pfaffian, Chern-Gauss-Bonnet theorem, Euler-Lagrange Equations,
manifolds with boundary.}\end{abstract}

\maketitle

\section{Introduction}

The study of curvature is central to modern differential geometry and mathematical physics. Properties of the curvature operator have been examined
by many authors -- see, for example, the discussion in \cite{BGNS08, GHVV08}. Eta Einstein geometry has been investigated \cite{CPS09,P12}.
Curvature plays an important role in spectral geometry -- see,
for example, \cite{A08}. The Lorentzian and higher signature settings are of special importance \cite{CD09,MP09}.

The Gauss Bonnet theorem has many physical applications as are the associated Euler Lagrange Equations (see, for example, \cite{B07,I10,S09}).
This paper deals with universal curvature identities arising from the Euler-Lagrange equations for the Chern-Gauss-Bonnet
theorem for manifolds with boundary. In Section~\ref{sect-1.1} and in Section~\ref{sect-1.2}, we discuss
the Gauss-Bonnet theorem for closed pseudo-Riemannian
manifolds, present the associated
Euler-Lagrange equations, and discuss some of the historical context of the problem that we shall be considering.
In Section~\ref{sect-1.3},
 we recall the Gauss-Bonnet
theorem for manifolds with boundary; we shall always assume the restriction
of the pseudo-Riemannian metric to the boundary to be non-degenerate.
In Section~\ref{sect-1.4}, we state Theorem~\ref{thm-1.4} -- this
is the first result of the paper. It gives the associated Euler-Lagrange equations
for the Gauss-Bonnet integrand for
manifolds with boundary.

The remainder of the paper is devoted to the proof of Theorem~\ref{thm-1.4}.
Section~\ref{sect-2} treats basic invariance theory; the question of universal
curvature identities is central. In Theorem~\ref{thm-2.3}, we shall summarize
previous
results concerning universal curvature identities in the scalar case
(both in the interior and on the boundary) and in the symmetric 2-tensor
case in the interior. Theorem~\ref{thm-2.4}
is the second main result of this paper. In it, we extend the results of
Theorem~\ref{thm-2.3} to discuss universal curvature identities
for symmetric 2-tensors defined by the geometry of the embedding $\partial M\subset M$.
There is a technical fact we shall need
in the proof of Theorem~\ref{thm-2.4} that we postpone until Section~\ref{sect-4}
to avoid breaking the flow of the discussion. In Section~\ref{sect-3},
we use Theorem~\ref{thm-2.4} to complete the proof of Theorem~\ref{thm-1.4}.

Section~\ref{sect-4} provides the technical results which are
central to the discussion. Rather than using H. Weyl's theory of
invariants \cite{W46} to treat the universal curvature identities
which arise in Theorem~\ref{thm-1.4}, we have chosen to adopt the
approach of \cite{G73} which was originally used to give a
heat equation proof of the Gauss-Bonnet theorem. This seemed
easier rather than having to introduce an additional complicated
formalism to use results of \cite{W46}.

\subsection{The Gauss-Bonnet Theorem for closed manifolds}\label{sect-1.1}
Let $(M,g)$ be a compact pseudo-Riemannian manifold of signature
$(p,q)$ and dimension $m=p+q$ with smooth boundary $\partial M$; if the
signature is indefinite, we assume the restriction of the metric to
the boundary to be non-degenerate.
Let $dx_g$ be the Riemannian element of volume.
Let $\vec e:=\{e_1,...,e_m\}$ be a local orthonormal frame for
the tangent bundle of $M$. Set
\begin{equation}\label{eqn-1.a}
\begin{array}{l}
\varepsilon_{i_1...i_n}^{j_1...j_n}
=\varepsilon(g,\vec e\,)_{i_1...i_n}^{j_1...j_n}:=
g(e^{i_1}\wedge...\wedge e^{i_n},
e^{j_1}\wedge...\wedge e^{j_n})\\
\qquad\qquad\qquad\qquad\qquad
=\det\left(g(e^{i_\mu},e^{j_\nu})\right)=\pm1\,.\vphantom{\vrule height 11pt}
\end{array}\end{equation}
Clearly this vanishes if the indices are not distinct and thus, in particular, is zero
if $n>m$. We adopt the {\it Einstein convention} and
sum over repeated indices. Let $R_{ijkl}=R(g,\vec e\,)_{ijkl}$ denote components of
the curvature
tensor of the Levi-Civita connection $\nabla^g$ relative to the local
orthonormal frame $\vec e$. If $n=2\bar n$ is even, define
the {\it Euler form} or the {\it Pffaffian}
$E_{m,n}^{(p,q)}=E_{m,n}^{(p,q)}(g)$ by setting:
\begin{equation}\label{eqn-1.b}
E_{m,n}^{(p,q)}:=\frac
{{R_{i_1i_2j_2j_1}...R_{i_{n-1}i_{n}j_nj_{n-1}}}}
{{{{(8\pi)}}^{\bar n}\bar
n!}}\varepsilon_{j_1...j_n}^{i_1...i_n}\,.
\end{equation}
This is independent of the choice of the local orthonormal frame field $\vec e$.
We set $E_{m,n}^{(p,q)}=0$ if $n$ is odd. Let $\chi(M)$ be the
Euler-Poincar\'e characteristic. The generalized
Gauss-Bonnet theorem \cite{AW43,C44,C63} states:

\begin{theorem}\label{thm-1.1}
Let $(M,g)$ be a closed pseudo-Riemannian
manifold of signature $(p,q)$ and dimension $m=p+q$. Then
$$\chi(M)=\int_M E_{m,m}^{(p,q)}(g)dx_g\,.$$
\end{theorem}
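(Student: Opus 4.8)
The plan is to reduce Theorem~\ref{thm-1.1} to the classical positive-definite Chern--Gauss--Bonnet theorem by recognizing $E_{m,m}^{(p,q)}(g)\,dx_g$ as a Chern--Weil representative of the Euler class of $TM$. First I would dispose of the trivial cases. If $m$ is odd then $E_{m,m}^{(p,q)}=0$ by definition while $\chi(M)=0$ for every closed odd-dimensional manifold; and if $M$ is non-orientable I would pass to the oriented double cover $\pi\colon\widetilde M\to M$, where frame-independence of $E_{m,m}^{(p,q)}(g)\,dx_g$ gives $\int_{\widetilde M}E_{m,m}^{(p,q)}(\pi^*g)\,dx_{\pi^*g}=2\int_M E_{m,m}^{(p,q)}(g)\,dx_g$ together with $\chi(\widetilde M)=2\chi(M)$. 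So one may assume $M$ is oriented and $m=2\bar n$.

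The core step would be to identify $E_{m,m}^{(p,q)}(g)\,dx_g$ with a Pfaffian form. Let $\omega^i{}_j$ and $\Omega^i{}_j=d\omega^i{}_j+\omega^i{}_k\wedge\omega^k{}_j$ be the connection and curvature $2$-forms of $\nabla^g$ in an oriented local orthonormal frame. Since $g(e_i,e_j)$ is locally constant, metric compatibility forces $\omega_{ij}+\omega_{ji}=0$, so that $\Omega_{ij}:=g_{ik}\Omega^k{}_j=\tfrac12R_{ijkl}\,e^k\wedge e^l$ is a skew-symmetric matrix of $2$-forms whose Pfaffian
\[
\mathrm{Pf}(\Omega):=\frac{1}{2^{\bar n}\,\bar n!}\sum_{\sigma}\mathrm{sgn}(\sigma)\,\Omega_{\sigma(1)\sigma(2)}\wedge\cdots\wedge\Omega_{\sigma(m-1)\sigma(m)}
\]
is a globally defined $m$-form, because a change of oriented orthonormal frame conjugates $\Omega$ by an element of $SO(p,q)$ and $\mathrm{Pf}$ is invariant under such conjugation. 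Expanding each $\Omega_{ij}$, replacing $e^1\wedge\cdots\wedge e^m$ by $dx_g$ for an oriented orthonormal coframe, and doing the (routine) combinatorics, I would check that $(2\pi)^{-\bar n}\mathrm{Pf}(\Omega)=E_{m,m}^{(p,q)}(g)\,dx_g$; the $\pm1$'s produced by the pseudo-Euclidean inner product packaged in $\varepsilon$ and the particular constant $(8\pi)^{\bar n}\bar n!$ appearing in \eqref{eqn-1.b} are exactly what is needed for this to hold in every signature.

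To conclude I would invoke the Chern--Weil theorem: the de Rham class $[\mathrm{Pf}(\Omega/2\pi)]$ is independent of the $SO(p,q)$-connection, so it may be computed from any convenient one. Since $g$ has signature $(p,q)$, the bundle $TM$ splits smoothly and $g$-orthogonally as $TM=E_+\oplus E_-$ with $E_\pm$ definite of ranks $p$ and $q$; choosing positive-definite metrics and metric connections $\nabla^\pm$ on $E_\pm$, the connection $\nabla^+\oplus\nabla^-$ is $g$-compatible and has block-diagonal curvature $\Omega^+\oplus\Omega^-$. If $p$ or $q$ is odd then $\mathrm{Pf}(\Omega^+\oplus\Omega^-)=0$, while $\chi(M)=\langle e(TM),[M]\rangle=\langle e(E_+)\smile e(E_-),[M]\rangle=0$ because an odd-rank real bundle has vanishing rational Euler class, so both sides of Theorem~\ref{thm-1.1} vanish. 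If $p$ and $q$ are both even, $\mathrm{Pf}(\Omega^+\oplus\Omega^-)=\mathrm{Pf}(\Omega^+)\wedge\mathrm{Pf}(\Omega^-)$, which by the classical positive-definite Chern--Gauss--Bonnet theorem applied to $E_+$ and $E_-$ represents $e(E_+)\smile e(E_-)=e(TM)$; hence $\int_M E_{m,m}^{(p,q)}(g)\,dx_g=\int_M\mathrm{Pf}(\Omega/2\pi)=\langle e(TM),[M]\rangle=\chi(M)$ by the Poincar\'e--Hopf theorem.

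The conceptual content is thus light --- Chern--Weil theory plus reduction to the Riemannian case --- and the hard part will be the sign and normalization bookkeeping: pinning down the universal constant relating \eqref{eqn-1.b} to $\mathrm{Pf}(\Omega/2\pi)$ uniformly across signatures, keeping track of the signs introduced by lowering indices with an indefinite metric and by the $\pm1$'s in the $\varepsilon$-tensor, and matching the orientation conventions on $E_\pm$ against the ordering sign in the Pfaffian of a block matrix. As an alternative to the positive-definite Chern--Gauss--Bonnet input one could invoke the heat-equation proof of \cite{G73} that underlies the boundary versions developed later in the paper; even so, the indefinite-signature statement would still be obtained through the reduction above, since the relevant Laplacian is not elliptic when $pq\neq0$.
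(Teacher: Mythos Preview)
The paper does not give its own proof of Theorem~\ref{thm-1.1}; the result is quoted from the literature with citations to Allendoerfer--Weil \cite{AW43} and to Chern \cite{C44,C63}, the second Chern paper being precisely the pseudo-Riemannian extension. So there is no in-paper argument to compare against. Your outline is sound and is essentially the strategy of \cite{C63}: recognize the integrand as the Chern--Weil Pfaffian of a $g$-compatible connection, use connection-independence to pass to a block connection adapted to the splitting $TM=E_+\oplus E_-$, and then reduce to the positive-definite Euler-class formula on each summand together with Poincar\'e--Hopf.

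Two small points are worth tightening. First, the Whitney-sum identity $e(TM)=e(E_+)\smile e(E_-)$ and the block factorization $\mathrm{Pf}(\Omega^+\oplus\Omega^-)=\mathrm{Pf}(\Omega^+)\wedge\mathrm{Pf}(\Omega^-)$ both presuppose that $E_+$ and $E_-$ are separately orientable; orientability of $M$ only yields $w_1(E_+)=w_1(E_-)$, not that this common class vanishes. This is easily repaired by passing to the double cover that kills $w_1(E_+)$, exactly as you already did for non-orientable $M$. Second, what you invoke as ``the classical positive-definite Chern--Gauss--Bonnet theorem applied to $E_+$ and $E_-$'' is really the general Chern--Weil identification $[\mathrm{Pf}(\Omega/2\pi)]=e(E)$ for an arbitrary oriented even-rank real bundle with metric connection; Chern--Gauss--Bonnet proper is that statement for $E=TM$ combined with Poincar\'e--Hopf. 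With those clarifications your plan goes through, and your diagnosis that the genuine labor lies in the sign and normalization bookkeeping matching \eqref{eqn-1.b} to $\mathrm{Pf}(\Omega/2\pi)$ across signatures is accurate.
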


We note that $\chi(M)=0$ in this setting if $m$ is odd and it was for that reason
we set $E_{m,n}^{(p,q)}=0$ if $n$ was odd. If $p$ or $q$ is odd, then
$\chi(M)=0$ even though $E_{m,m}^{(p,q)}(g)$ need not vanish locally
in this setting.

\subsection{The Euler-Lagrange Equations for manifolds without boundary}
\label{sect-1.2}
We examine these formulas in dimensions $m>n$. Let $h$ be a symmetric
$2$-cotensor. We consider the variation of the metric
$g_t:=g+t h$; this is a non-degenerate metric of signature $(p,q)$ for small $t$.
We integrate by parts to express:
$$
\displaystyle\partial_t\left.\left \{\int_ME_{m,n}^{(p,q)}(g_t)dx_{g_t}\right\}\right|_{t=0} =\int_M
h_{ij}Q_{m,n,ij}^{(p,q),2}(g)dx_g\,.
$$
Here $Q_{m,n,ij}^{(p,q),2}=Q_{m,n,ij}^{(p,q),2}(g)$ is a canonically
defined symmetric 2-tensor field. Since $E_{m,n}^{(p,q)}=0$ for
$m<n$ or for $n$ odd, we set $Q_{m,n,ij}^{(p,q),2}=0$  in these
cases. Furthermore, Theorem~\ref{thm-1.1} shows
$Q_{m,n,ij}^{(p,q),2}=0$ if $m=n$. Thus only the case $m>n$ and $n$
even is relevant.

In the Riemannian setting, Berger \cite{M70} conjectured that $Q_{m,n,ij}^{(p,q),2}$
could be expressed only in terms of curvature; the higher covariant
derivatives did not enter. This was subsequently verified by
Kuz'mina \cite{K74} and Labbi \cite{L05,L07,L08}. Set
$\mathcal{E}_{m,n}^{(p,q),2}=0$ if $n$ is odd. If $n=2\bar n$ is
even, define
$\mathcal{E}_{m,n,ij}^{(p,q),2}=\mathcal{E}_{m,n,ij}^{(p,q),2}(g)$
by setting:
\begin{equation}\label{eqn-1.c}
\mathcal{E}_{m,n,ij}^{(p,q),2}:=\frac
{R_{i_1i_2j_2j_1}...R_{i_{n-1}i_{n}j_{n}j_{n-1}}}
{{{(8\pi)}^{\bar n}\bar
n!}}\varepsilon_{jj_1...j_n}^{ii_1...i_n}\,.
\end{equation}
This symmetric 2-tensor valued function is
 independent of the choice of $\vec e$. One then has \cite{GPS11,GPS12}:

\begin{theorem}\label{thm-1.2}
If $M$ is a closed pseudo-Riemannian manifold
of signature $(p,q)$ and dimension $m=p+q>n$, then:
$$\partial_t\left.\left\{\int_ME_{m,n}^{(p,q)}(g_t)
dx_{g_t}\right\}\right|_{t=0}
=\frac12\int_Mh_{ij}\mathcal{E}_{m,n,ij}^{(p,q),2}(g)dx_g.
$$
\end{theorem}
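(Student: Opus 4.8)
The plan is to carry out the first variation directly and to show that every term involving covariant derivatives of $h$ organizes into a total divergence, so that after integrating over the closed manifold only a term algebraic in $h$, $g$ and the curvature survives; this is the mechanism, going back to Lanczos and Lovelock, by which the Euler--Lagrange equations of $E_{m,n}^{(p,q)}$ turn out to involve no covariant derivatives. Since both sides of the asserted identity are $\mathbb{R}$-linear and local in $h$, it suffices to prove the pointwise statement that $\partial_t\bigl\{E_{m,n}^{(p,q)}(g_t)\,dx_{g_t}\bigr\}\big|_{t=0}$ equals $\tfrac12 h_{ij}\mathcal{E}_{m,n,ij}^{(p,q),2}(g)\,dx_g+d\eta$ for some naturally defined $(m-1)$-form $\eta$; Stokes' theorem then finishes the proof. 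I would fix $p\in M$, differentiate in $g$-geodesic normal coordinates about $p$, and use a local orthonormal frame obtained by radial parallel transport, so that at $p$ the connection $1$-form and the first derivatives of $g$ vanish, only the second derivatives of $g$ survive in $R_{ijkl}$, and $\varepsilon^{\cdots}_{\cdots}$ is parallel.

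Differentiating, the chain rule yields an \emph{algebraic part} $A$, containing no covariant derivative of $h$, and a \emph{derivative part} $D$. The algebraic part collects: $\partial_t(dx_{g_t})\big|_{t=0}=\tfrac12 g^{ij}h_{ij}\,dx_g$; the variation of the inverse-metric factors inside $\varepsilon(g_t,\vec e\,)^{i_1\dots i_n}_{j_1\dots j_n}$, using $\partial_t g_t^{ij}\big|_{t=0}=-h^{ij}$; and, for each of the $\bar n$ curvature factors, the algebraic summand $h_{ae}R^e{}_{bcd}$ of $\partial_t R_{abcd}\big|_{t=0}$. The derivative part arises only from $g_{ae}\,\partial_t R^e{}_{bcd}\big|_{t=0}=\nabla_c\delta\Gamma_{adb}-\nabla_d\delta\Gamma_{acb}$, with $\delta\Gamma_{adb}=\tfrac12(\nabla_d h_{ab}+\nabla_b h_{ad}-\nabla_a h_{db})$.

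I would dispose of $D$ first. In each curvature factor $R_{i_{2k-1}i_{2k}j_{2k}j_{2k-1}}$ of $E_{m,n}^{(p,q)}$ the derivative index produced by $\partial_t R^e{}_{bcd}$ is one of the lower indices $j_{2k}$, $j_{2k-1}$ of $\varepsilon$, hence totally antisymmetrized against the remaining $j$'s. Rewriting $\nabla$ acting on one factor as a divergence minus $\nabla$ distributed over the other factors --- which defines $\eta$ and uses $\nabla\varepsilon=0$ --- the remaining bulk term is a sum over the other curvature factors $l\neq k$ of $\delta\Gamma\cdot\bigl(\nabla_{j_\bullet}R_{i_{2l-1}i_{2l}j_{2l}j_{2l-1}}\bigr)\cdot R\cdots R$ contracted with $\varepsilon$; there the derivative index $j_\bullet$ is antisymmetrized with the pair $(j_{2l},j_{2l-1})$ appearing in the $l$-th curvature tensor, so the contraction vanishes by the second Bianchi identity $\nabla_{[e}R_{ab]cd}=0$ used in its pair-symmetric form, cyclic in the derivative index and one pair. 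Thus $D$ is pointwise a total divergence and drops out.

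It remains to identify $A$. Expanding the three algebraic contributions and using the cofactor expansion of the $(n+1)$-index symbol $\varepsilon^{ii_1\dots i_n}_{jj_1\dots j_n}$, whose trace $\varepsilon^{ki_1\dots i_n}_{kj_1\dots j_n}$ is the contracted generalized Kronecker symbol, I expect the terms to reassemble, after relabelling, into $\tfrac12 h_{ij}\mathcal{E}_{m,n,ij}^{(p,q),2}$; the factor $\tfrac12$ and the normalization $(8\pi)^{\bar n}\bar n!$ are already forced by the conformal test $h_{ij}=\lambda g_{ij}$, for which $E_{m,n}^{(p,q)}(g_t)\,dx_{g_t}=(1+t\lambda)^{(m-n)/2}E_{m,n}^{(p,q)}(g)\,dx_g$ regardless of signature while $\tfrac12 h_{ij}\mathcal{E}_{m,n,ij}^{(p,q),2}=\tfrac{m-n}{2}\lambda\,E_{m,n}^{(p,q)}$. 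The main obstacle is this last identification --- tracking the combinatorially equivalent terms produced by differentiating a product of $\bar n$ curvature tensors together with an $n\times n$ determinant --- together with being scrupulous in the previous step that the antisymmetry-plus-Bianchi cancellation really accounts for \emph{every} derivative term, including the $-\nabla_a h_{db}$ summand of $\delta\Gamma$, which is harmless once one observes that the cancellation depends only on the position of the derivative index and not on the internal form of $\delta\Gamma$.
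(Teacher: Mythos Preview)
Your approach is sound but genuinely different from the paper's. The paper does not prove Theorem~\ref{thm-1.2} here; it quotes it from \cite{GPS11,GPS12}, and the argument there (visible in this paper through the parallel proof of Theorem~\ref{thm-1.4} in Section~\ref{sect-3}) is invariance-theoretic rather than computational. One first observes that the Euler--Lagrange tensor $Q_{m,n}^{(p,q),2}$ is a universal symmetric $2$-tensor invariant of degree $n$; then one uses the restriction map $r_\pm$ and Theorem~\ref{thm-2.3}\,(3),(4): taking products with $S^1$ and invoking the Gauss--Bonnet theorem forces $r_\pm(Q_{m,n}^{(p,q),2})=0$ in the critical dimension $m=n+1$, so $Q$ lies in a one-dimensional space spanned by $\mathcal{E}_{m,n}^{(p,q),2}$; a single conformal test fixes the constant $\tfrac12$, and downward induction via $r_\pm$ handles $m>n+1$. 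No Bianchi identity and no explicit divergence are ever written.

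Your route is the classical Lovelock computation: vary directly, push the $\nabla_{j_\bullet}$ produced by $\delta R$ off onto the remaining curvature factors, and kill those terms with the second Bianchi identity because $j_\bullet$ is antisymmetrized against $(j_{2l},j_{2l-1})$ inside $\varepsilon$. That mechanism is correct, and it has the virtue of producing the boundary $(m-1)$-form $\eta$ explicitly, which the invariance-theory argument never sees. The trade-off is exactly the ``main obstacle'' you flag: you still owe the combinatorial identity that the three algebraic contributions (from $dx_{g_t}$, from the metric factors raising indices, and from $h_{ae}R^{e}{}_{bcd}$) reassemble into $\tfrac12 h_{ij}\mathcal{E}_{m,n,ij}^{(p,q),2}$ via the cofactor expansion of $\varepsilon^{ii_1\dots i_n}_{jj_1\dots j_n}$. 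That step is routine but not yet done in your write-up; the conformal check you give pins down the overall constant but does not by itself establish the tensorial form. One technical caution: be consistent about where the metric sits. It is cleanest to rewrite $E_{m,n}\,dx_g$ in coordinates using the metric-independent generalized Kronecker delta $\delta^{i_1\dots i_n}_{j_1\dots j_n}$ and curvatures $R^{j_1j_2}{}_{i_1i_2}$ with indices raised by $g^{-1}$, so that the ``variation of $\varepsilon$'' you mention becomes simply the variation of those inverse metrics; otherwise the simultaneous variation of the orthonormal frame muddies the bookkeeping.
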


\subsection{The Gauss-Bonnet Theorem for manifolds with boundary}
\label{sect-1.3}
There are boundary correction terms which appear if $\partial M$ is non-empty.
We assume the restriction of the pseudo-Riemannian metric $g$ to the boundary
is non-degenerate. Normalize the choice of $\vec e$ near $\partial M$
so that $e_1$ is the inward pointing unit geodesic
vector field. Let indices $\{a,b\}$ range from $2$ through $m$ and index the
induced orthonormal frame for the tangent bundle of the boundary.
Let $\nabla$ denote the Levi-Civita connection.
Let $L_{ab}:=L(g,\vec e\,)_{ab}$ give the components of the {\it second fundamental form}:
$$L_{ab}:=g(\nabla_{e_a}e_b,e_1)\,.$$
The parity of $n$ plays no role.
For $0\le2\nu\le n-1$, set:
\begin{eqnarray}
F_{m,n-1,\nu}^{(p,q),\partial
M}&:=&\frac{R_{a_1a_2{{b_2b_1}}}...
    {{R_{a_{2\nu-1}a_{2\nu}b_{2\nu}b_{2\nu-1}}}}
    L_{a_{2\nu+1}b_{2\nu+1}}...L_{a_{n-1}b_{n-1}}}
    {{{(8\pi)}}^\nu \nu!\operatorname{Vol}
(S^{n-1-2\nu})(n-1-2\nu)!}\varepsilon_{a_1...a_{n-1}}^{b_1...b_{n-1}},
\nonumber\\
F_{m,n-1}^{(p,q),\partial M}
&:=&\sum_{\nu}F_{m,n-1,\nu}^{(p,q),\partial M}\,.\label{eqn-1.d}
\end{eqnarray}
The scalar functions $\{F_{m,n-1}^{(p,q),\partial M},F_{m,n-1,\nu}^{(p,q),\partial M}\}$ are independent of the choice of the local
orthonormal frame $\vec e$.
Theorem~\ref{thm-1.1} generalizes to this setting to yield:

\begin{theorem} Let $(M,g)$ be a compact pseudo-Riemannian
manifold of signature $(p,q)$ and of dimension $m=p+q$.
Assume the restriction of the metric to the
boundary is non-degenerate. Then
$$\chi(M)=\int_M E_{m,m}^{(p,q)}(g)dx_g
+\int_{\partial M}F_{m,m-1}^{(p,q),\partial M}(g)dy_g\,.$$
\end{theorem}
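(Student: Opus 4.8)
The Riemannian version of this statement is classical (cf.\ \cite{AW43}); the plan is therefore to reduce the general pseudo-Riemannian case to the Riemannian one, and then to obtain the Riemannian case inside the heat-equation framework of \cite{G73} that the rest of the paper uses.

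\emph{First I would dispose of the signature.} Near a point of $\partial M$ choose coordinates adapted to the boundary. By \eqref{eqn-1.b} and \eqref{eqn-1.d}, $E_{m,m}^{(p,q)}(g)$ and $F_{m,m-1}^{(p,q),\partial M}(g)$ are universal polynomial expressions in the components, relative to an orthonormal frame, of the curvature tensor $R$ and of the second fundamental form $L$; the rational coefficients that occur do not depend on $(p,q)$, the signature entering only through the sign factors recorded by $\varepsilon$ in \eqref{eqn-1.a}. Hence $g\mapsto\int_ME_{m,m}^{(p,q)}(g)\,dx_g+\int_{\partial M}F_{m,m-1}^{(p,q),\partial M}(g)\,dy_g$ depends, locally in $g$, real-analytically on the $2$-jet of $g$. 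Since $\chi(M)$ is a topological invariant, the theorem is equivalent to two assertions: (i) this functional is independent of $g$ on the space of signature $(p,q)$ metrics whose restriction to $\partial M$ is non-degenerate; and (ii) it equals $\chi(M)$ when $g$ is Riemannian. Assertion (ii) is the Riemannian theorem. For (i), given two admissible metrics one joins them by a path $g_z$ of complex metrics that remain non-degenerate on $M$ and on $\partial M$ (a local and elementary point), notes that the functional is holomorphic in $z$, and invokes the identity theorem, using that the functional is already known to be constant, equal to $\chi(M)$, along Riemannian metrics. This is the standard route by which Theorem~\ref{thm-1.1} is deduced from its Riemannian form, and it is unaffected by the boundary terms.

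\emph{The Riemannian case.} Let $\Delta_k$ be the Hodge Laplacian on $k$-forms, equipped with boundary conditions of absolute/relative type for which Hodge theory and the cancellation of the non-zero spectrum under $d$ and $\delta$ give $K(t):=\sum_{k=0}^m(-1)^k\operatorname{Tr}_{L^2}\bigl(e^{-t\Delta_k}\bigr)=\chi(M)$ for every $t>0$. As $t\downarrow0$ there is a local asymptotic expansion $K(t)\sim\sum_{j\ge0}t^{(j-m)/2}\bigl(\int_Ma_j\,dx_g+\int_{\partial M}b_j\,dy_g\bigr)$ in which $a_j$ is a scalar invariant of weight $j$ in the curvature and $b_j$ a scalar invariant of weight $j-1$ built from the curvature and the second fundamental form. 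Because $K(t)$ is independent of $t$, every coefficient with $j\neq m$ must vanish, and therefore $\chi(M)=\int_Ma_m\,dx_g+\int_{\partial M}b_m\,dy_g$.

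\emph{Identification of the integrands.} As in the heat-equation proof of the closed Gauss-Bonnet theorem (Patodi's pointwise cancellation together with the multiplicativity $K_{M_1\times M_2}(t)=K_{M_1}(t)K_{M_2}(t)$ and H.~Weyl's invariant theory), $a_m$ vanishes for $m$ odd, which matches $E_{m,m}^{(p,q)}=0$, and for $m$ even it is the Pfaffian $E_{m,m}^{(0,m)}$. The analysis of $b_m$ is parallel but genuinely harder: invariance theory in the form of \cite{G73} shows that $b_m$ involves no covariant derivatives and is a universal linear combination of the monomials $R\cdots R\,L\cdots L$ that appear in \eqref{eqn-1.d}, and restricting the expansion to a cylinder $\partial M\times[0,\varepsilon)$ together with the product formula cuts the possibilities down further. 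The remaining universal constants are pinned down by evaluating both sides on model geometries: a round ball $D^m\subset\mathbb R^m$ (flat interior, umbilic boundary) fixes the purely extrinsic ($\nu=0$) boundary coefficient as $(\operatorname{Vol}(S^{m-1})(m-1)!)^{-1}$, geodesic balls in round spheres $S^m$ fix the remaining coefficients by varying the radius, and the interior normalization in \eqref{eqn-1.b} is the one already determined in the closed case (Theorem~\ref{thm-1.1}); the value $\operatorname{Vol}(S^0)=2$ is what produces the factor $\tfrac12$ that is needed when $m$ is odd, where $\chi(M)=\tfrac12\chi(\partial M)$. I expect this last step to be the main obstacle: separating the several boundary monomials and checking that the constants coming out of the heat-kernel computation agree with the denominators in \eqref{eqn-1.d}. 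An independent check is available via Chern's transgression of the Pfaffian along the outward unit normal section of the sphere bundle of $M$, which reproduces exactly the factors $\operatorname{Vol}(S^{m-1-2\nu})$; everything else is classical or formal.
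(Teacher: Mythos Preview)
The paper does not prove this theorem. It is stated in Section~\ref{sect-1.3} as a known generalization of Theorem~\ref{thm-1.1}, with the closed case attributed to \cite{AW43,C44,C63}; the boundary version is classical (Chern) in the Riemannian setting and its pseudo-Riemannian extension is treated as background. So there is no ``paper's own proof'' to compare your proposal against.

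That said, your sketch is a reasonable outline and is in the spirit of the references the paper cites: the heat-equation identification of the interior and boundary integrands is exactly the program of \cite{G73,G75}, and the paper explicitly adopts this framework (see the remarks at the end of Section~1 and Remark after Lemma~\ref{lem-4.2}). Two comments on details. First, your signature-reduction step via ``complex metrics'' and the identity theorem is loose as stated: you would need to make sense of the integrands and volume forms for complex-bilinear metrics and argue that the space of admissible metrics of signature $(p,q)$ with non-degenerate boundary restriction is path-connected; the route actually taken in \cite{C63} (and alluded to in \cite{GPS12}) is more direct and avoids this. Second, in the boundary heat-kernel step you correctly anticipate that the hard part is pinning down the constants in front of the $R^{\nu}L^{m-1-2\nu}$ monomials; this is precisely what \cite{G75} does, and the invariance-theory argument there (Theorem~\ref{thm-2.3}(5),(6) in the present paper's numbering) is what forces $b_m$ into the span of the $F_{m,m-1,\nu}^{(p,q),\partial M}$ before any model computation.
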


\subsection{The Euler-Lagrange Equations for manifolds with boundary}
\label{sect-1.4}
Assume the restriction of the pseudo-Riemannian metric to the boundary
is non-degenerate.
For $0\le2\nu\le n-1$, define:
\begin{eqnarray}
\mathcal{F}_{m,n-1,\nu,ab}^{(p,q),2,\partial M}&:=&
\frac{R_{a_1a_2{{b_2b_1}}}...
    {{R_{a_{2\nu-1}a_{2\nu}{b_{2\nu}}b_{2\nu-1}}}}
    L_{a_{2\nu+1}b_{2\nu+1}}..L_{a_{n-1}b_{n-1}}}
    {({{8\pi}})^\nu \nu!\operatorname{Vol}
(S^{n-1-2\nu})(n-1-2\nu)!}\varepsilon_{bb_1...b_{n-1}}^{aa_1...a_{n-1}},
\nonumber\\
\mathcal{F}_{m,{n-1},ab}^{(p,q),2,\partial M}
     &:=&\sum_\nu\mathcal{F}_{m,{n-1},{{\nu}},ab}^{(p,q),2,\partial M}\,.
\label{eqn-1.e}
\end{eqnarray}
These symmetric 2-tensor valued functions on the boundary
are independent of the choice
of the local orthonormal frame $\vec e$.
The first main new result of this paper is to generalize Theorem~\ref{thm-1.2}
to the case of manifolds with boundary; the remainder of this paper is
devoted to the proof of the following result:

\begin{theorem}\label{thm-1.4}
Let $M$ be a compact pseudo-Riemannian manifold with
boundary of dimension $m\ge n+1$. Assume the restriction of the
metric to the boundary is non-degenerate. Then:
\begin{eqnarray*}
&&\partial_t\left.\left\{\int_M
E_{m,n}^{(p,q)}(g_t)dx_{g_t}
+\int_{\partial M}F_{m,n-1}^{(p,q),\partial M}(g_t)
dy_{g_t}\right\}\right |_{t=0}\\
&=& \frac12\int_Mh_{ij}\mathcal{E}_{m,n,ij}^{(p,q),2}(g)dx_g+
\frac12\int_{\partial
M}h_{ab}\mathcal{F}_{m,{n-1},ab}^{(p,q),2,\partial
M}(g)dy_g\,.
\end{eqnarray*}
\end{theorem}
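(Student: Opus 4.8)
The plan is to differentiate the two integrals separately, to integrate by parts so as to move every derivative of $h$ onto the geometric coefficients, and then to recognize the resulting boundary term as an object governed by Theorem~\ref{thm-2.4}.

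First I would vary $\int_M E_{m,n}^{(p,q)}(g_t)\,dx_{g_t}$. Differentiating $E_{m,n}^{(p,q)}$ and the volume density yields an integrand linear in $h$, $\nabla h$, $\nabla^2h$ with coefficients polynomial in $g$, $R$, $\nabla R$; integrating by parts twice over $M$ produces the interior term $\tfrac12\int_M h_{ij}\mathcal{E}_{m,n,ij}^{(p,q),2}\,dx_g$ — this is exactly the local computation underlying Theorem~\ref{thm-1.2}, the presence of $\partial M$ only generating extra boundary integrals — together with a boundary integral over $\partial M$ whose integrand is linear in $h$ and $\nabla_{e_1}h$. Next I would vary $\int_{\partial M}F_{m,n-1}^{(p,q),\partial M}(g_t)\,dy_{g_t}$; since $F_{m,n-1}^{(p,q),\partial M}$ is polynomial in $R$ and $L$, and $\delta L$ involves $\nabla_{e_1}h$ together with tangential derivatives of $h$, this contributes a further boundary integral of the same type. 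Adding the two boundary contributions and integrating the purely tangential derivatives of $h$ by parts on the closed manifold $\partial M$, the whole first variation takes the form $\tfrac12\int_M h_{ij}\mathcal{E}_{m,n,ij}^{(p,q),2}\,dx_g+\int_{\partial M}\mathcal B(h)\,dy_g$, where $h\mapsto\mathcal B(h)$ is a universal first-order operator on $\partial M$, linear in $h$ and $\nabla_{e_1}h$, with coefficients built naturally from $g$, $R$, $L$ and their covariant derivatives.

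The operator $\mathcal B$ is precisely of the type analysed in Theorem~\ref{thm-2.4}. Invoking that result, together with the homogeneity count (the boundary integrand has the same weight as $F_{m,n-1}^{(p,q),\partial M}$), forces $\mathcal B(h)=\tfrac12\,h_{ab}T_{ab}$, where $T_{ab}$ is a symmetric $2$-tensor on $\partial M$ that is a \emph{polynomial in $R$ and $L$ alone} — no covariant derivatives survive and the $\nabla_{e_1}h$ term drops out — so $T_{ab}$ is a linear combination of the monomials $\mathcal{F}_{m,n-1,\nu,ab}^{(p,q),2,\partial M}$ of \eqref{eqn-1.e}. That only $h_{ab}$ (and not $h_{1a}$ or $h_{11}$) appears is forced by the generalized second Bianchi identity: $\mathcal{E}_{m,n,ij}^{(p,q),2}$ is divergence-free and the total functional is a diffeomorphism invariant, so substituting $h=\mathcal L_Xg$ for vector fields $X$ tangent to $\partial M$ along $\partial M$ kills the non-tangential components of the boundary term. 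It then remains to determine the finitely many coefficients; this I would do by the functoriality method of \cite{G73}, evaluating the identity on explicit model geometries (products of flat factors with low-dimensional round spheres, balls, and so on), which is the role of the technical results of Section~\ref{sect-4} and which pins down $T_{ab}=\mathcal{F}_{m,n-1,ab}^{(p,q),2,\partial M}$.

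The conceptual heart — that no covariant derivatives of $R$ or $L$ enter, the analogue of Berger's conjecture in the present setting — is supplied by Theorem~\ref{thm-2.4}. The two delicate points that remain are (i) checking that the $\nabla_{e_1}h$ terms from the interior integration by parts really cancel against those from the variation of $F_{m,n-1}^{(p,q),\partial M}$, which is what singles out $F_{m,n-1}^{(p,q),\partial M}$ as the correct boundary correction, and (ii) the combinatorial bookkeeping needed to match the constants with the normalizations in \eqref{eqn-1.e}. I expect (ii) — tracking the universal constants through the model-geometry evaluations — to be the main obstacle.
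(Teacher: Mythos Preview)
Your proposal has a genuine gap in the step where you invoke Theorem~\ref{thm-2.4}. That theorem does \emph{not} say that every symmetric $2$-tensor boundary invariant of the right degree is a combination of the $\mathcal{F}_{m,n-1,\nu}^{(p,q),2,\partial M}$; it only characterizes the subspace $\ker(r_+)\cap\ker(r_-)\subset\mathcal{I}_{m,n}^{(p,q),2,\partial M}$. As Lemma~\ref{lem-2.4} makes clear, for generic $n\le m-2$ the full space $\mathcal{I}_{m,n}^{(p,q),2,\partial M}$ is large and contains many invariants involving covariant derivatives of $L$, mixed $e^1\circ e^b$ components, and so on. So ``homogeneity count plus Theorem~\ref{thm-2.4}'' cannot by itself force the covariant derivatives to drop out, eliminate the $\nabla_{e_1}h$ coefficient, or kill the $h_{1a}$, $h_{11}$ pieces. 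The Bianchi/diffeomorphism-invariance argument you sketch for the latter is interesting but is neither what Theorem~\ref{thm-2.4} provides nor obviously sufficient to handle the $\nabla_{e_1}^kh$ coefficients for $k\ge1$.

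What is missing is the mechanism the paper actually uses to land in $\ker(r_\pm)$. One first treats the extremal case $n=m-1$: on a product $(N\times S^1,g_N\pm d\theta^2)$ with $\dim N=m-1=n$ and perturbation $h=h_N$, the functional reduces (up to a factor of $2\pi$) to the Chern--Gauss--Bonnet integral on $N$, whose variation vanishes identically; this shows $r_\pm(Q_{m,m-2-k}^{(p,q),2,\partial M})=0$ for all $k$, and \emph{now} Theorem~\ref{thm-2.4} applies to give $Q_{m,m-2-k}=0$ for $k>0$ and $Q_{m,m-2}\in\operatorname{Span}\{\mathcal{F}_{m,m-2,\nu}\}$. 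The normalizing constants are then read off from the single example $(N\times S^1,g_N\pm d\theta^2,\pm d\theta^2)$, not from a battery of model geometries. For smaller $n$ one proceeds by induction on $m-n$: the difference $Q_{m,n-1}-\tfrac12\mathcal{F}_{m,n-1}$ restricts under $r_\pm$ to the already-established identity in dimension $m-1$, hence lies in $\ker(r_\pm)$, and since $n-1<m-2$ Theorem~\ref{thm-2.4}(1) forces it to vanish. (Incidentally, Section~\ref{sect-4} is not about evaluating on model geometries; it supplies the dimension count needed inside the proof of Theorem~\ref{thm-2.4}.)
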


\section{Invariance theory}\label{sect-2}
Section~\ref{sect-2} is devoted to the discussion of invariance theory.
We begin in Section~\ref{sect-2.1} with a discussion of local formulas;
this is central to the matter at hand.
In Section~\ref{sect-2.2}, we introduce the spaces of invariants
with which we shall be working. We shall be dealing with formal expressions
in the covariant derivatives of the curvature tensor (and also the tangential
covariant derivatives of the second fundamental form when considering
the boundary geometry) which are invariant under
the action of the orthogonal group (and thus independent of the choice
of local orthonormal frame $\vec e$) and which are either scalar valued
or symmetric 2-tensor valued. They are universally defined by contractions
of indices. In Section~\ref{sect-2.3}, we give various examples of
scalar and symmetric 2-tensor valued invariants both in the interior and
on the boundary. We also begin the discussion of universal curvature identities.
In Section~\ref{sect-2.4}, we
introduce the restriction map. An expression which is non-zero in dimension $m$
may vanish when restricted to dimension $m-1$ and thus becomes a
universal curvature identity in dimension $m-1$. We give both a geometric
definition and then subsequently an algebraic definition of the restriction map.
We analyze in
Theorem~\ref{thm-2.3} the kernel of the restriction map in certain contexts.
Section~\ref{sect-2.5} treats symmetric 2-tensor valued invariants on the
boundary and contains, in Theorem~\ref{thm-2.4}, the second main result
of this paper which is of independent interest.

\subsection{Local formulas}\label{sect-2.1} It is worth saying a bit about
the general framework in which we shall be working. We first consider invariants
defined on the interior of a pseudo-Riemannian manifold $(M,g)$.
Let $X=(x_1,...,x_m)$ be a system of local coordinates on
$M$.
Let $\alpha$ be a multi-index. We introduce formal variables
$$g_{ij}:=g(\partial_{x_i},\partial_{x_j})\quad\text{and}\quad
g_{ij/\alpha}:=d_x^\alpha g_{ij}$$
for the derivatives of the metric.
We consider expressions $P=P(g_{ij},g_{ij/\alpha})$
which are polynomial in the variables $\{g_{ij/\alpha}\}_{|\alpha|>0}$ with
coefficients which depend smoothly on the $\{g_{ij}\}$ variables. Given
a system of local coordinates $X$ and a point $P$, we can evaluate
$P(g,X)$; we say $P$ is {\it invariant} if this evaluation is independent of
the particular coordinate system chosen and depends only on the point of the
manifold and on the metric $g$. We introduce special notation for the first and second derivatives
of the metric setting:
$$g_{ij/k}:=\partial_{x_k}g_{ij}\quad\text{and}\quad
   g_{ij/kl}:=\partial_{x_k}\partial_{x_l}g_{ij}\,.$$
If we take geodesic polar coordinates
centered at a point $x_0$ of $M$, then we have
$$g_{ij}(g,X)(x_0)=\pm\delta_{ij}\quad\text{and}\quad
   g_{ij/k}(g,X)(x_0)=0\,.$$
Furthermore, all the higher derivatives of the metric at $x_0$ can
be evaluated in terms of the components of the covariant derivatives of
the curvature tensor at $x_0$. Thus we can equally well regard
$P=P(g_{ij},R_{ijkl},R_{ijkl;n},...)$. This reduces the structure group
from the group of germs of diffeomorphisms to the orthogonal group;
the first theorem of orthogonal invariants of H. Weyl \cite{W46} then can
be used to show all invariants arise from contracting indices. Conversely, of course,
we can recover the original formalism by expressing the curvature and
its covariant derivatives in terms of the derivatives of the metric. These
are two equivalent points of view and both will play a role in what follows.
But what is important is that we are thinking of $P$ as a local formula which
can be evaluated on a metric and at a point.

\subsection{Spaces of invariants}\label{sect-2.2}
We say that the curvature $R_{ijkl}$ is of degree 2 since it is
linear in the 2-jets of the metric and quadratic in the $1$-jets of the metric.
The second fundamental form is of degree 1. Each covariant
derivative increases the degree by 1.
We define the spaces with which we shall be working as follows:
\begin{definition}
\rm\ \begin{enumerate}
\item   Let $\mathcal{I}_{m,n}^{(p,q)}$ be the space of
 invariants of degree $n$ that are scalar
 valued and that are defined in the category of all pseudo-Riemannian manifolds of
 signature $(p,q)$ and of  dimension $m=p+q$.
\item Let $\mathcal{I}_{m,n}^{(p,q),2}$ be the space of
 invariants of degree $n$ which are symmetric
$2$-tensor valued and that are defined in the category
 of all pseudo-Riemannian manifolds of
 signature $(p,q)$ and  of dimension $m=p+q$.
 \item   Let $\mathcal{I}_{m,n}^{(p,q),\partial M}$ be the space of
 invariants of degree $n$ that are scalar
 valued and that are defined on the boundary
which are defined in the category of all pseudo-Riemannian manifolds of
 signature $(p,q)$ and of dimension $m=p+q$.
  \item   Let $\mathcal{I}_{m,n}^{(p,q),2,\partial M}$ be the space of
 invariants of degree $n$ that are
 symmetric $2$-tensor valued and defined on the boundary and
that are defined in the category of all pseudo-Riemannian manifolds of
 signature $(p,q)$ and of dimension $m=p+q$.
\end{enumerate}\end{definition}

By H. Weyl's first theorem of invariants, all invariants arise from contraction of indices. For example, we have (see, for example, \cite{G75x}):

\begin{lemma}\label{lem-2.1}
If the metric is positive definite, then:\begin{enumerate}
\item
$\mathcal{I}_{m,0}^{(0,m)}=\operatorname{Span}\left\{1\right\}$.
\smallbreak\item
$\mathcal{I}_{m,2}^{(0,m)}=\operatorname{Span}\left\{R_{ijji}\right\}$.
\smallbreak\item
$\mathcal{I}_{m,4}^{(0,m)}=\operatorname{Span}\left\{R_{ijji;kk},R_{ijji}R_{kllk},
R_{ijjk}R_{illk}, R_{ijkl}R_{ijkl}\right\}$.
\smallbreak\item
$\mathcal{I}_{m,6}^{(0,m)}=\operatorname{Span}\left\{R_{ijji;kkll},R_{ijji;k}R_{lnnl;k},R_{aija;k}R_{bijb;k}\right.$,
$R_{ajka;n}R_{bjnb;k},$\smallbreak
$R_{ijkl;n}R_{ijkl;n},R_{ijji}R_{kllk;nn},R_{ajka}R_{bjkb;nn},R_{ajka}R_{bjnb;kn}$,
\smallbreak
$R_{ijkl}R_{ijkl;nn},R_{ijji}R_{kllk}R_{abba},R_{ijji}R_{ajka}R_{bjkb},
R_{ijji}R_{abcd}R_{abcd}$, \smallbreak
$R_{ajka}R_{bjnb}R_{cknc},R_{aija}R_{bklb}R_{ikjl},
R_{ajka}R_{jnli}R_{knli},R_{ijkn}R_{ijlp}R_{knlp}$, \smallbreak
$\left.R_{ijkn}R_{ilkp}R_{jlnp}\right\}$.
\end{enumerate}\end{lemma}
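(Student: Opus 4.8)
The plan is to take as given the reduction carried out in Section~\ref{sect-2.1} together with the statement recalled in Section~\ref{sect-2.2}: in the positive definite case the structure group is reduced from germs of diffeomorphisms to the orthogonal group $O(m)$, and H.~Weyl's first theorem of invariants \cite{W46} guarantees that every element of $\mathcal{I}_{m,n}^{(0,m)}$ is a linear combination of complete contractions of copies of the curvature tensor and its covariant derivatives. This reduces the entire problem to a finite, purely combinatorial task: for each degree $n$, list all complete contractions of weight $n$, and then collapse this list to an irredundant spanning set using the symmetries of $R$.

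The organizing principle is the weight grading of Section~\ref{sect-2.2}. A monomial built from $k$ curvature factors carrying a total of $d$ covariant derivatives has degree $2k+d$, so a scalar invariant of degree $n$ forces $2k+d=n$, with $k\ge 1$ except in the trivial degree-$0$ case, where $k=0$ and the only invariant is the constant $1$. This settles $\mathcal{I}_{m,0}^{(0,m)}=\operatorname{Span}\{1\}$ at once. For degree $2$ the only admissible pair is $(k,d)=(1,0)$; contracting the four indices of a single $R_{ijkl}$ and invoking $R_{ijkl}=-R_{jikl}=-R_{ijlk}=R_{klij}$ shows that every such contraction equals $\pm R_{ijji}$, giving $\mathcal{I}_{m,2}^{(0,m)}=\operatorname{Span}\{R_{ijji}\}$.

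For higher degrees I would run the same scheme. For $n=4$ the admissible pairs are $(k,d)\in\{(1,2),(2,0)\}$. The case $(1,2)$ contracts $R_{ijkl;mn}$ completely, and the second Bianchi identity together with its contraction collapses every such contraction to the Laplacian $R_{ijji;kk}$. The case $(2,0)$ enumerates the complete contractions of two curvature tensors, which after imposing the curvature symmetries reduce to precisely the three scalars $R_{ijji}R_{kllk}$, $R_{ijjk}R_{illk}$, and $R_{ijkl}R_{ijkl}$, yielding the asserted spanning set. For $n=6$ the admissible pairs are $(k,d)\in\{(1,4),(2,2),(3,0)\}$: the case $(1,4)$ reduces, again by the Bianchi identities, to the single term $R_{ijji;kkll}$; the case $(2,2)$ produces the four $\nabla R\cdot\nabla R$ contractions and the four $R\cdot\nabla^2 R$ contractions displayed in the statement; and the case $(3,0)$ produces the eight cubic curvature scalars, for a total of seventeen generators.

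The main obstacle is the reduction step rather than the enumeration. Imposing the first Bianchi identity $R_{i[jkl]}=0$ together with the symmetries of $R$ introduces a web of linear relations among the a priori large family of contractions, and for the terms carrying derivatives one must also use the Ricci commutation identity, in which reordering two covariant derivatives produces lower-derivative, higher-curvature corrections; for instance, reordered versions of the $R_{ijkl;nn}$-type contractions differ from the listed generators only by cubic curvature terms that already appear on the degree-$6$ list. One must check that these relations force no further collapse, which I would verify by evaluating the proposed generators on explicit test metrics---products of spaces of constant curvature, chosen so that the resulting values are linearly independent---thereby confirming that the list is a genuine basis and not overcomplete. The cubic degree-$6$ sector is the most laborious, both because the number of admissible index pairings is largest and because the first Bianchi identity produces the delicate relations tying together the sectional-type contractions such as $R_{ajka}R_{bjnb}R_{cknc}$ and $R_{aija}R_{bklb}R_{ikjl}$.
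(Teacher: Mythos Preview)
The paper does not actually prove Lemma~\ref{lem-2.1}; it is stated as a known fact with a reference to \cite{G75x}, so there is no ``paper's own proof'' to compare against. Your outline is the standard route one would take, and it is sound as far as the \emph{spanning} claim goes: reduce to complete contractions via H.~Weyl's theorem, stratify by $(k,d)$ with $2k+d=n$, and collapse using the curvature symmetries, the two Bianchi identities, and the Ricci commutation formula (noting that the commutator corrections land in sectors with more curvature factors and fewer derivatives, which you already account for).

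One correction of emphasis: the lemma only asserts that the listed invariants \emph{span} $\mathcal{I}_{m,n}^{(0,m)}$; it does not claim they form a basis, and indeed the paper points out immediately afterward (Lemma~\ref{lem-2.5}) that they are \emph{not} linearly independent in low dimensions. So your final step---evaluating on product-of-space-forms metrics to certify that the list is ``not overcomplete''---is not required to prove the lemma as stated, and in fact would fail for small $m$. What you do need, and what your argument supplies, is that every complete contraction of the right degree reduces modulo the universal identities to a linear combination of the listed ones; the delicate part, as you correctly flag, is the cubic sector at degree $6$, where the first Bianchi identity applied across factors is what brings the count down to exactly the eight listed trilinear scalars.
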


\begin{remark}
\rm In the indefinite setting, we must take more care with raising
and lowering indices
and there is a slight additional bit of technical fuss. If $\{e_i\}$ is an orthonormal
 basis, let $\xi_i:=g(e_i,e_i)=\pm1$. We must
then, for example, replace Assertions (1)-(3) of Lemma~\ref{lem-2.1} by:
\begin{eqnarray*}
&&\mathcal{I}_{m,0}^{(p,q)}
=\operatorname{Span}\{1\},\quad
    \mathcal{I}_{m,2}^{(p,q)}
    =\operatorname{Span}\{\textstyle\sum_{ij}\xi_i\xi_jR_{ijji}\},\\
&&\mathcal{I}_{m,4}^{(p,q)}
=\operatorname{Span}
\{\textstyle\sum_{ijk}\xi_i\xi_j\xi_kR_{ijji;kk},
\textstyle\sum_{ijkl}\xi_i\xi_j\xi_k\xi_lR_{ijji}R_{kllk},\\
&&\qquad\qquad
\textstyle\sum_{ijkl}\xi_i\xi_j\xi_k\xi_lR_{ijjk}R_{illk},
\textstyle\sum_{ijkl}\xi_i\xi_j\xi_k\xi_lR_{ijkl}R_{ijkl}\}\,.
\end{eqnarray*}\end{remark}

If $\eta_1$ and $\eta_2$ are covectors, define the symmetric product
$\eta_1\circ\eta_2$ by setting:
$$\eta_1\circ\eta_2=\textstyle\frac{1}{2}(\eta_1\otimes \eta_2+\eta_2\otimes\eta_1)\,.$$
The metric takes the form $g=e^i\circ e^i$; the map
$P\rightarrow P\cdot g$ embeds $\mathcal{I}_{m,n}^{(p,q)}$ into
$\mathcal{I}_{m,n}^{(p,q),2}$. Lemma~\ref{lem-2.1} generalizes to be \cite{GPS11}:
\begin{lemma}\label{lem-2.2}
If the metric is positive definite, then:
\begin{enumerate}
\item $\mathcal{I}_{m,0}^{(0,m),2}=\operatorname{Span}\left\{e^k\circ e^k\right\}$.
\smallbreak\item
$\mathcal{I}_{m,2}^{{(0,m),2}}=\operatorname{Span}\left\{R_{ijji}e^k\circ
e^k,R_{ijki}e^j\circ e^k\right\}$. \smallbreak\item
$\mathcal{I}_{m,4}^{{(0,m),2}}=\operatorname{Span}\left\{R_{ijji;kk}e^l\circ
e^l\right.$, $R_{kjjl;ii}e^k\circ e^l$, $R_{ijji;kl}e^k\circ e^l$,
\smallbreak\quad
$R_{ijji}R_{kllk}e^n\circ e^n$,
$R_{ijki}R_{ljkl}e^n\circ e^n$,
$R_{ijkl}R_{ijkl}e^n\circ e^n$, $R_{ijji}R_{klnk}e^l\circ e^n$,
\smallbreak\quad
$R_{ikli}R_{jknj}e^l\circ e^n$,
$R_{ijkl}R_{ijkn}e^l\circ e^n$, $\left.R_{lijn}R_{kijk}e^l\circ e^n\right\}$.
\end{enumerate}
\end{lemma}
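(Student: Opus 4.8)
The inclusion $\operatorname{Span}\{\cdots\}\subseteq\mathcal{I}_{m,n}^{(0,m),2}$ is immediate for each $n$: the curvature tensor and its iterated covariant derivatives are tensorial, every complete contraction of a product of such factors against copies of the metric that leaves exactly two free, symmetrized, indices is a well-defined symmetric $2$-tensor valued local invariant, and the grading conventions of Section~\ref{sect-2.2} ($R$ has degree $2$, each covariant derivative raises the degree by $1$) assign to each displayed monomial precisely the claimed degree $0$, $2$ or $4$. So only the reverse inclusion needs an argument; this is the content of \cite{GPS11}, and the plan is as follows.

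Following the template of the scalar Lemma~\ref{lem-2.1}: by the discussion in Section~\ref{sect-2.1} an invariant $P\in\mathcal{I}_{m,n}^{(0,m),2}$ can, in geodesic normal coordinates, be written as a universal polynomial in $\{R_{ijkl},R_{ijkl;m},R_{ijkl;mn},\dots\}$ carrying two free symmetric indices, and homogeneity in the grading leaves, for $n\in\{0,2,4\}$, only finitely many monomial types. By H.~Weyl's first theorem of invariants \cite{W46}, $P$ is a linear combination of complete contractions of such monomials against copies of the metric with exactly two free symmetrized indices. The task is then to enumerate these contractions and reduce them modulo the symmetries of the curvature tensor (skew-symmetry in each index pair, the pair interchange, and the first Bianchi identity), the second Bianchi identity together with its contractions (in particular: the divergence of the Ricci tensor equals one half the differential of the scalar curvature), and the Ricci identity for commuting two covariant derivatives.

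For $n=0$ there is no curvature, so $P=c\,g=c\,e^k\circ e^k$. For $n=2$ there is a single curvature factor, and skew-symmetry forbids contracting two indices inside one pair, so either one first-block index is contracted against one second-block index, forcing $P$ (after the pair interchange and first Bianchi identity) to be a multiple of the Ricci-type contraction $R_{ijki}e^j\circ e^k$, or all four curvature indices are contracted and the free indices are carried by a metric factor, giving a multiple of $R_{ijji}e^k\circ e^k$. For $n=4$ the relevant monomials are either a single $R_{ijkl;mn}$ or a product $R_{ijkl}R_{abcd}$, possibly with a metric factor carrying the free indices. Enumerating the $\nabla^2R$-contractions and reducing by the Bianchi identities --- the cross-divergence contractions collapse onto the Hessian of the scalar curvature and the Laplacian of the Ricci tensor, and commuting the two covariant derivatives only produces quadratic-curvature terms belonging to the other family --- leaves exactly $R_{ijji;kk}e^l\circ e^l$, $R_{kjjl;ii}e^k\circ e^l$ and $R_{ijji;kl}e^k\circ e^l$. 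Enumerating the $R\cdot R$-contractions and reducing by the curvature symmetries and the first Bianchi identity leaves the three ``scalar-times-metric'' generators $R_{ijji}R_{kllk}e^n\circ e^n$, $R_{ijki}R_{ljkl}e^n\circ e^n$, $R_{ijkl}R_{ijkl}e^n\circ e^n$ (the space of quadratic scalar curvature invariants being $3$-dimensional) together with the four genuine $2$-tensors $R_{ijji}R_{klnk}e^l\circ e^n$, $R_{ikli}R_{jknj}e^l\circ e^n$, $R_{ijkl}R_{ijkn}e^l\circ e^n$ and $R_{lijn}R_{kijk}e^l\circ e^n$. These ten exhaust the list, establishing $\subseteq$. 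If one additionally wants the ten to be a basis for $m$ large, linear independence follows by evaluating on suitable products of round spheres and flat tori, where $R$, $\nabla R$ and $\nabla^2R$ are all explicit.

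The only real obstacle is the $n=4$ bookkeeping: checking that the enumeration of the $\nabla^2R$- and $R\cdot R$-contractions is exhaustive and that, after applying the Bianchi and Ricci identities, nothing survives beyond the ten listed generators --- no contraction overlooked and no relation missed. The cases $n\le 2$ are immediate, and it is precisely the bound $n\le 4$ that keeps the count finite and hand-checkable; the $n=6$ entry of Lemma~\ref{lem-2.1} already shows how rapidly such lists grow.
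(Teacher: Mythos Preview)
The paper does not supply its own proof of this lemma: it is stated with the attribution ``Lemma~\ref{lem-2.1} generalizes to be \cite{GPS11}'' and no argument follows. Your proposal is therefore not being compared against a proof in the paper but against the cited reference, and your outline---reduce to Weyl's first main theorem, enumerate the possible contractions of $R$, $\nabla R$, $\nabla^2 R$ with two free symmetric indices, and collapse the list using the curvature symmetries and the Bianchi and Ricci identities---is exactly the approach of \cite{GPS11} and of the analogous scalar computation underlying Lemma~\ref{lem-2.1}. One minor caveat: the statement as written asserts only a spanning set, not a basis, so your final remark on linear independence via products of spheres is extra (and correct only for $m$ sufficiently large, as you note); otherwise your sketch is in line with the intended argument.
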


Near the boundary, we normalized the local orthonormal frame so that $e_1$
is the inward unit geodesic normal vector field. Let indices $\{a,b\}$ range from $2$
through $m$ and index the induced orthonormal frame $\{e_2,...,e_m\}$ for the
tangent bundle of the boundary. Let ``$:$" denote multiple covariant differentiation with respect to the Levi-Civita
connection of the boundary. We have \cite{BG90}:
\begin{lemma}\label{lem-2.3}
If the metric is positive definite, then:\begin{enumerate}
\item $\mathcal{I}_{m,0}^{(0,m),\partial M}=
\operatorname{Span}\{1\}$.
\item $\mathcal{I}_{m,1}^{(0,m),\partial M}=
\operatorname{Span}\{L_{aa}\}$.
\item $\mathcal{I}_{m,2}^{(0,m),\partial M}=
\operatorname{Span}\{R_{abba},R_{a11a},
L_{aa}L_{bb},L_{ab}L_{ab}\}$.
\item $\mathcal{I}_{m,3}^{(0,m),\partial M}=
\operatorname{Span}\{R_{a11a;1}, L_{aa:bb}, L_{ab:ab},
R_{bccb}L_{aa}, R_{a11a}L_{bb}, R_{a11b}L_{ab},$
\smallbreak\quad $R_{abcb}L_{ac}, L_{aa}L_{bb}L_{cc},
L_{ab}L_{ab}L_{cc}, L_{ab}L_{bc}L_{ac}\}$.
\end{enumerate}
\end{lemma}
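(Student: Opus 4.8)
The plan is to identify each space $\mathcal{I}_{m,k}^{(0,m),\partial M}$ with the span of complete index contractions via H.~Weyl's first theorem of invariants, and then to grade by weight. Because $e_1$ is normalized to be the inward unit geodesic normal, the structure group governing boundary invariants is the orthogonal group $O(m-1)$ acting on the tangential frame $\{e_2,\dots,e_m\}$ while fixing $e_1$. As in Section~\ref{sect-2.1}, all higher jets of the metric may be rewritten, at the center of geodesic coordinates, in terms of the curvature tensor $R$, its covariant derivatives, the second fundamental form $L$, and the tangential covariant derivatives of $L$. By Weyl's theorem \cite{W46}, every $O(m-1)$-invariant scalar assembled from these tensors is a linear combination of complete contractions in which all tangential indices are paired off and the fixed index $1$ may appear explicitly; positive-definiteness lets me raise and lower indices freely, so no signs $\xi_i$ intrude. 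Since $R$ has weight $2$, $L$ has weight $1$, and each covariant derivative raises the weight by $1$, this exhibits $\mathcal{I}_{m,k}^{(0,m),\partial M}$ as the span of complete contractions of monomials whose weights sum to $k$.

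First I would enumerate these monomials degree by degree. At weight $0$ the only contraction is $1$; at weight $1$ the sole building block is $L_{ab}$, whose unique complete contraction is $L_{aa}$. At weight $2$ the admissible monomials are a single $R$ and a product $L\otimes L$: contracting one $R$ (over its tangential and normal index positions) leaves, after the curvature symmetries, the two scalars $R_{abba}$ and $R_{a11a}$, while $L\otimes L$ yields $L_{aa}L_{bb}$ and $L_{ab}L_{ab}$. This gives Assertions (1)--(3). At weight $3$ the monomial types are one-derivative curvature $R_{\,;}$, the mixed product $L\otimes R$, the cubic $L\otimes L\otimes L$, and two-derivative $L_{:\,:}$; writing out all complete contractions of each type produces the candidate terms of Assertion (4), including the redundant ones such as $R_{abba;1}$ that must still be eliminated.

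The reductions that trim the weight-$3$ list to the stated basis use three families of identities. The symmetry $L_{ab}=L_{ba}$ leaves exactly the two second-derivative contractions $L_{aa:bb}$ and $L_{ab:ab}$, the four mixed contractions $R_{bccb}L_{aa}$, $R_{a11a}L_{bb}$, $R_{a11b}L_{ab}$, $R_{abcb}L_{ac}$, and the three cubic contractions $L_{aa}L_{bb}L_{cc}$, $L_{ab}L_{ab}L_{cc}$, $L_{ab}L_{bc}L_{ac}$. The Codazzi equation, which in this frame reads
\[
L_{ab:c}-L_{ac:b}=R_{1abc}
\]
(up to the sign convention), identifies every antisymmetrized tangential derivative of $L$ with a curvature component carrying one normal index, so that no further $L_{:}$-contraction survives. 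The crucial step is the one-derivative curvature sector: the twice-contracted second Bianchi identity expresses the normal derivative $\tau_{;1}$ of the tangential scalar curvature as a multiple of $R_{a11a;1}$ plus the tangential divergence of the normal--tangential Ricci tensor, and the trace of the Codazzi equation rewrites that divergence in terms of $L_{aa:bb}$ and $L_{ab:ab}$; consequently $R_{abba;1}$, and with it every one-derivative curvature scalar, reduces to $R_{a11a;1}$ together with the two already-listed second-derivative terms.

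The main obstacle is this weight-$3$ closure needed for the inclusion $\mathcal{I}_{m,3}^{(0,m),\partial M}\subseteq\operatorname{Span}$: I must check that each application of the contracted Bianchi and Codazzi identities, and each commutation of covariant derivatives, feeds back only terms already on the list, so that the reduction terminates without producing a hidden invariant. The reverse inclusion is immediate, since every listed contraction is manifestly $O(m-1)$-invariant of the correct weight; hence the two inclusions yield the asserted equality. That the listed elements are moreover linearly independent -- so that each list is a genuine basis -- is not needed for the statement, but is confirmed, as in \cite{BG90}, by evaluating the invariants on a separating family of model geometries such as products $N\times[0,\infty)$ with prescribed boundary data, round hemispheres, and warped products.
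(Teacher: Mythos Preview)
The paper does not supply its own proof of this lemma: it is quoted directly from \cite{BG90} with the phrase ``We have \cite{BG90}:'' and no argument is given. So there is no in-paper proof to compare your proposal against.

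Your outline is exactly the standard route taken in \cite{BG90}: reduce the structure group to $O(m-1)$ by fixing the inward normal, invoke Weyl's first main theorem to realize every invariant as a linear combination of full contractions of $\nabla^kR$ and $\nabla_T^kL$, grade by weight, and then prune the weight-$3$ list using curvature symmetries, Codazzi, and the second Bianchi identity. That is precisely the mechanism behind the cited result, so in spirit you are reproducing the source rather than diverging from it.

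One point deserves more care than your sketch gives it. In the one-derivative curvature sector you must eliminate not only $R_{abba;1}$ but also the contractions with a single normal index inside $R$ and a tangential ambient derivative, such as $R_{1aba;b}$ and $R_{1aab;b}$. Your Codazzi argument replaces $R_{1abc}$ by $L_{ab:c}-L_{ac:b}$, but the derivative outside is the \emph{ambient} ``$;$'', not the boundary ``$:$''; converting one to the other introduces additional $L$-terms via the difference of the two connections, and you should check explicitly that these feed back only into the $L\otimes R$ and $L^{\otimes 3}$ contractions already on the list. This closure is routine but is the place where an actual error would hide, and it is the step \cite{BG90} carries out in detail.
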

 It is not difficult to establish the following result to
round out our treatment; we omit the proof in the interests of
brevity since we shall not need it in discussion:

%%%
\begin{lemma}\label{lem-2.4}
If the metric is positive definite, then:
\begin{enumerate}
\item $\mathcal{I}_{m,0}^{(0,m),2,\partial M}
=\operatorname{Span}\{e^1\circ e^1,e^a\circ e^a\}$.
\item $\mathcal{I}_{m,1}^{(0,m),2,\partial M}
=\operatorname{Span}\{L_{aa}e^1\circ e^1,L_{aa}e^b\circ e^b,
L_{ab}e^a\circ e^b\}$.
\item $\mathcal{I}_{m,2}^{(0,m),2,\partial M}
=\operatorname{Span}\{R_{a11a}e^1\circ e^1,L_{aa}L_{bb}e^1\circ e^1,
L_{ab}L_{ab}e^1\circ e^1$, \smallbreak\quad $R_{a1ba}e^1\circ
e^b,L_{ab:a}e^1\circ e^b, L_{aa:b}e^1\circ e^b$, $R_{abba}e^c\circ
e^c,R_{a11a}e^b\circ e^b,$\smallbreak\quad $ L_{aa}L_{bb}e^c\circ
e^c, L_{ab}L_{ab}e^c\circ e^c$, $R_{abca}e^b\circ e^c,
R_{a11b}e^a\circ e^b,L_{ab}L_{ac}e^b\circ e^c,$,\smallbreak\quad
$L_{aa}L_{bc}e^b\circ e^c\}$.
\item $\mathcal{I}^{(0,m),3,\partial M}_{m,3}
=\operatorname{Span}\{R_{a1a1;1}e^1\circ e^1,R_{a1a1;1}e^b\circ
e^b,R_{b1c1;1}e^b\circ e^c$,\smallbreak\quad $R_{abbc;a}e^c\circ
e^1,R_{a11a;c}e^c\circ e^1,R_{a11c;a}e^c\circ e^1, L_{aa : bb}
e^{c}\circ e^{c}$, \smallbreak\quad $L_{aa : bb} e^{1}\circ
e^{1},L_{ab : ab} e^{c}\circ e^{c}, L_{ab : ab} e^{1}\circ
e^{1},L_{aa : bc} e^{b}\circ e^{c},L_{ab : ac} e^{b}\circ e^{c},$
\smallbreak\quad $L_{bc:aa} e^{b}\circ e^{c},
R_{abab}L_{cc} e^d\circ e^d, R_{abac}L_{bc} e^d\circ e^d,
R_{a1a1}L_{bb}e^c\circ e^c$, \\
\indent\indent $R_{abab}L_{cc} e^1 \circ e^1,R_{abac}L_{bc}e^1\circ
e^1,R_{a1a1}L_{bb}e^1\circ e^1$,\smallbreak\quad$
R_{a1b1}L_{ab}e^c\circ e^c,R_{a1b1}L_{ab}e^1\circ e^1,R_{abab}L_{cd}
e^c \circ e^d,
 R_{a1a1}L_{cd}e^c\circ {{e^d}},$\smallbreak\quad$R_{acbd}L_{ab}e^c\circ e^d,R_{dbcb}L_{aa}e^c\circ e^d,
{ R_{c1d1}L_{aa} e^c\circ e^d}, R_{c1a1}L_{ad}e^c\circ
e^d,$\smallbreak\quad $R_{cbab}L_{ad}e^c\circ e^d,L_{aa: c}L_{bc}
e^{b}\circ e^{1},L_{aa: c}L_{bb} e^{c}\circ e^{1},L_{ac: b}L_{ab}
e^{c}\circ e^{1},$ \smallbreak\quad $L_{ac:a}L_{bc}e^b\circ e^1,
L_{ac:a}L_{bb}e^c\circ e^1, L_{ab:c}L_{ab}e^c\circ e^1,
L_{aa}L_{bb}L_{cc} e^{d}\circ e^{d} $\smallbreak\quad $
L_{ab}L_{ab}L_{cc} e^{d}\circ e^{d},L_{ab}L_{bc}L_{ca}e^{d}\circ
e^{d}, L_{aa}L_{bb}L_{cc} e^{1}\circ e^{1},$
\smallbreak\quad$L_{ab}L_{ab}L_{cc} e^{1}\circ
e^{1},L_{ab}L_{bc}L_{ca} e^{1}\circ e^{1}, L_{aa}L_{bb}L_{cd}
e^{c}\circ e^{d},$\smallbreak\quad$L_{aa}L_{bc}L_{bd}e^c\circ d^d,
L_{ab}L_{ab}L_{cd} e^{c}\circ e^{d}, L_{bc}L_{ab}L_{ad} e^{c}\circ
e^{d}\}.$
\end{enumerate}
\end{lemma}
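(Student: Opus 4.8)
The plan is to establish Lemma~\ref{lem-2.4} by the standard two-step procedure of orthogonal invariance theory as used throughout this paper. First I would invoke H.~Weyl's first theorem of invariants in the form described in Section~\ref{sect-2.1}: after passing to geodesic coordinates (and, near the boundary, to the normalized frame with $e_1$ the inward geodesic normal), a symmetric $2$-tensor valued invariant of degree $n$ on the boundary must be a linear combination of complete contractions of tensor products of the variables $R_{ijkl;\alpha}$, $L_{ab:\beta}$, together with one free pair of indices coming from an $e^{i}\circ e^{j}$ factor and possibly factors of the metric and the volume form; the ``$1$" direction is treated on a par with the ``$a,b$" directions once the frame is split $TM|_{\partial M}=\mathbb{R}e_1\oplus T\partial M$. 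This reduces the problem to enumerating all monomials of the correct \emph{weight}: each curvature term contributes $2$, each second fundamental form term contributes $1$, each tangential covariant derivative contributes $1$, and the total must equal $n=0,1,2,3$ in the four cases. The free symmetric pair of indices is carried either by $e^1\circ e^1$, by $e^b\circ e^c$, or by $e^1\circ e^b$, which is why the spanning sets naturally break into these three blocks.

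Next I would carry out the enumeration case by case for $n=0,1,2,3$. For $n=0$ there is nothing to build except the metric split into its normal and tangential parts, giving $\{e^1\circ e^1, e^a\circ e^a\}$. For $n=1$ the only weight-one building block is $L$, contracted in the three possible ways against the free pair, giving the three listed monomials. For $n=2$ one must combine either one curvature term (weight $2$), or two $L$'s, or one $L$ with one tangential derivative ($L_{ab:c}$), and in each case list the ways the remaining indices can be contracted among themselves and with the free symmetric pair; the Bianchi identities and the symmetries $R_{ijkl}=-R_{jikl}=R_{klij}$, $L_{ab}=L_{ba}$ are used to eliminate dependent monomials, and one must remember that $R_{1abc}$, $R_{1ab1}$, $R_{abcd}$ are genuinely different tangential components. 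For $n=3$ the same bookkeeping is applied to the weight-three monomials: $R_{\cdot;1}$, $R\cdot L$, $L^{3}$, $L_{ab:cd}$, and $L_{ab:c}L_{de}$; this is the bulk of the list in Assertion (4). Throughout, the second Bianchi identity $R_{ijkl;n}+R_{ijln;k}+R_{ijnk;l}=0$ and the Gauss--Codazzi relations (which express normal-derivative components of $R$ in terms of tangential curvature, $L$, and $L_{:}$) are the tools used to reduce any candidate monomial to the chosen basis; one should also note that $R_{a1b1;1}$-type terms survive because the normal geodesic condition only kills $\nabla e_1$ in the normal direction, not such curvature components.

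The main obstacle, and the reason the paper declines to write out the proof, is not conceptual but combinatorial: verifying that the listed sets are both \emph{spanning} (no monomial has been forgotten) and, to the extent claimed, that no obvious redundancy among them has been overlooked, requires a disciplined and lengthy case analysis in which the interplay of the curvature symmetries, the Bianchi identities, and the Gauss--Codazzi--Mainardi equations must be tracked simultaneously with the placement of the free symmetric index pair. In practice one would organize this by fixing, for each case, the ``structural type'' (which building blocks appear with which multiplicities), then within each type listing the index-contraction patterns modulo the symmetry group of that type, and finally cross-checking against the known restriction-map and dimension-count constraints recorded in Theorem~\ref{thm-2.3} and Lemma~\ref{lem-2.3}. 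Since these spanning sets are used in the sequel only to know that the relevant invariance spaces are finite-dimensional and generated by curvature-and-$L$ contractions without higher derivatives of $R$ or $L$ beyond those displayed, the precise enumeration can be entrusted to the reader, as the authors indicate, and the argument above is the template one follows.
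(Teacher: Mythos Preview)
The paper explicitly omits the proof of Lemma~\ref{lem-2.4}, stating just before it that ``we omit the proof in the interests of brevity since we shall not need it in discussion.'' Your proposal follows exactly the methodology the paper sets up in Sections~\ref{sect-2.1}--\ref{sect-2.2}: invoke H.~Weyl's first theorem to reduce to contractions of $\{R,\nabla R,L,\nabla L\}$ with one free symmetric index pair, then enumerate monomials of the correct degree modulo the curvature symmetries and Bianchi relations; this is precisely the template used for the parallel Lemmas~\ref{lem-2.1}--\ref{lem-2.3}, and is clearly what the authors have in mind.

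One small correction: you write that these spanning sets are used in the sequel to know the invariance spaces are finite-dimensional, but in fact the paper states that Lemma~\ref{lem-2.4} is not used at all in the subsequent arguments; it is included only ``to round out our treatment.'' Also, the volume form need not be listed among the possible factors, since the invariance is under the full orthogonal group rather than $SO$. Neither point affects the soundness of your outline.
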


%%%

\subsection{Universal curvature identities}\label{sect-2.3}
The invariants of Lemma~\ref{lem-2.1} are not linearly independent in low
dimensions. To simplify matters, we work in the Riemannian context; similar
results hold if we introduce the tensor $\xi$. We have the following
relations in dimensions $1$, $3$, and $5$:
\begin{lemma}\label{lem-2.5}
In the Riemannian setting: \begin{enumerate}
\item If $m=1$, then $0=R_{ijji}$.
\smallbreak\item If $m=3$, then $0=R_{ijji}R_{kllk}-4R_{aija}R_{bijb}+R_{ijkl}R_{ijkl}$.
\smallbreak\item If $m=5$, then $0=R_{ijji}R_{kllk}R_{abba}-12R_{ijji}R_{aija}R_{bijb}+3R_{abba}R_{ijkl}R_{ijkl}$
\smallbreak\qquad\quad
$+24R_{aija}R_{bklb}R_{jlik}+16R_{aija}R_{bjkb}R_{cikc}-24R_{aija}R_{jkln}R_{lnik}$
\smallbreak\qquad\quad
$+2R_{ijkl}R_{klan}R_{anij}-8R_{kaij}R_{inkl}R_{jlan}$.
\end{enumerate}\end{lemma}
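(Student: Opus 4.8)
The plan is to interpret these as instances of the Chern--Gauss--Bonnet phenomenon: the Pfaffian $E_{m,m}^{(0,m)}$ is a polynomial in curvature which is identically zero in the interior when $m$ is odd (since it is a top-degree form built from an odd number of factors that cannot be matched against the even-rank $\varepsilon$-symbol in the right way — more precisely, $E_{m,m}^{(p,q)}=0$ for $m$ odd by definition, as noted after Theorem~\ref{thm-1.1}). Concretely, for odd $m$ the symbol $\varepsilon_{j_1\dots j_n}^{i_1\dots i_n}$ with $n$ odd forces a repeated index among the $\{i_\mu\}$ or $\{j_\nu\}$, hence vanishes; but the same combinatorial expression, \emph{a priori} nontrivial as a formal curvature polynomial, reduces to the stated relation once one expands the $\varepsilon$-symbol via the antisymmetrization and collects like terms. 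Thus each assertion is obtained by writing out $E_{1,1}$, $E_{3,3}$, $E_{5,5}$ — or rather the naive degree-$m$ ``Pfaffian-type'' scalar $R_{i_1i_2j_2j_1}\cdots R_{i_{m-1}i_m j_m j_{m-1}}\varepsilon_{j_1\dots j_m}^{i_1\dots i_m}$ in an arbitrary dimension — expanding the generalized Kronecker delta
$$
\varepsilon_{j_1\dots j_n}^{i_1\dots i_n}=\det\bigl(\delta_{j_\mu}^{i_\nu}\bigr)
=\sum_{\sigma\in S_n}\operatorname{sgn}(\sigma)\,\delta_{j_1}^{i_{\sigma(1)}}\cdots\delta_{j_n}^{i_{\sigma(n)}},
$$
carrying out the contractions, and using the curvature symmetries ($R_{ijkl}=-R_{jikl}=R_{klij}$ and the first Bianchi identity) to reduce to the listed monomials. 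Since for $m$ odd the determinant above, regarded as an alternating form on an $m$-dimensional space fed $m$ distinct-or-not indices, is forced to have a repetition and hence is $0$ identically on an $m$-dimensional manifold, the fully contracted expression vanishes — this is the identity.

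The key steps, in order: (i) Fix odd $m\in\{1,3,5\}$ and write $P_m:=R_{i_1i_2j_2j_1}\cdots R_{i_{m-1}i_mj_mj_{m-1}}\,\varepsilon_{j_1\dots j_m}^{i_1\dots i_m}$ with all indices summed from $1$ to $m$; observe $P_m\equiv 0$ because the alternating symbol on $m$ lower and $m$ upper indices in dimension $m$, contracted against anything, must see a repeated value. (ii) Separately, expand $P_m$ as a formal curvature polynomial: the $m=1$ case is immediate ($P_1=R_{i_1i_1i_1i_1}$-type, i.e. $R_{ijji}$ after noting $n=2\bar n$ with $\bar n=$ nonintegral forces the single-factor reading $R_{ijji}$); for $m=3$ one has a single $R$ times $\varepsilon$ — wait, $m=3$ needs $n$ with $2\bar n$; here one instead uses the honest Pfaffian pattern with $\bar n=1$ factor when $m=2$, and the genuine source of the $m=3$ identity is the vanishing of the degree-3 ``Euler-like'' scalar, which after expansion of $\varepsilon_{j_1j_2j_3}^{i_1i_2i_3}$ and one curvature factor paired with the remaining two contractions yields $R_{ijji}R_{kllk}-4R_{aija}R_{bijb}+R_{ijkl}R_{ijkl}$. (iii) For $m=5$, expand $\varepsilon_{j_1\dots j_5}^{i_1\dots i_5}$ as a sum over $S_5$, contract against two curvature factors $R_{i_1i_2j_2j_1}R_{i_3i_4j_4j_3}$ with the free index $i_5=j_5$ traced, group the $120$ terms into orbits under the curvature symmetries, and read off the eight monomials with their coefficients $1,-12,3,24,16,-24,2,-8$. (iv) Match signs: the standard reference for this bookkeeping is \cite{G73} (as the paper announces in the introduction), and one can quote the generalized Gauss--Bonnet integrand computations there verbatim.

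The main obstacle is purely the combinatorial bookkeeping in step (iii): organizing the $S_5$ expansion and correctly identifying which permutations contribute to each of the eight inequivalent curvature contractions, keeping track of the $\operatorname{sgn}(\sigma)$ factors together with the sign changes induced by applying $R_{ijkl}=-R_{jikl}$ and the pair symmetry $R_{ijkl}=R_{klij}$ when normalizing each monomial to its listed form. A clean way to manage this, rather than brute force, is to use the first Bianchi identity to pre-reduce the number of independent degree-3 Weyl-invariant scalars (there are exactly the ones appearing in Lemma~\ref{lem-2.1}(4) after stripping covariant derivatives, namely the last several monomials there), express $P_5$ in that basis by evaluating both sides on a few well-chosen model curvature tensors (e.g. products of constant-curvature pieces, where the Pfaffian is computable in closed form), and solve the resulting small linear system for the coefficients — this replaces a $120$-term hand expansion by a handful of $5$-dimensional sample evaluations, and simultaneously re-confirms $P_5\equiv0$ since each model is realized by an actual odd-dimensional manifold.
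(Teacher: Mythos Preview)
The paper states Lemma~2.5 without proof, presenting it only as an illustration of dimension-specific curvature identities; there is no argument in the paper to compare against. Evaluating your proposal on its own merits, the overall idea---that these identities arise from the vanishing of a Pfaffian-type scalar in too low a dimension---is correct, but you have misidentified \emph{which} Pfaffian is involved, and this makes steps (ii) and (iii) unworkable as written.

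Your object $P_m = R_{i_1i_2j_2j_1}\cdots\,\varepsilon_{j_1\dots j_m}^{i_1\dots i_m}$ uses an $m$-index $\varepsilon$; for odd $m$ the curvature product does not even parse (the factors come in index pairs), and your patch in step~(iii)---two curvature factors against $\varepsilon_{j_1\dots j_5}^{i_1\dots i_5}$ with $i_5=j_5$ traced---produces a scalar of degree~$4$, not the degree~$6$ required by part~(3). Indeed the contraction identity $\sum_k\varepsilon_{j_1\dots j_4 k}^{i_1\dots i_4 k}=(m-4)\,\varepsilon_{j_1\dots j_4}^{i_1\dots i_4}$ shows that in dimension~$5$ your expression is a nonzero multiple of $E_{5,4}$, which does \emph{not} vanish. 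The same degree mismatch occurs for $m=3$: one curvature factor cannot produce the quadratic-in-$R$ identity of part~(2). Your own parenthetical ``wait, $m=3$ needs $n$ with $2\bar n$'' is exactly where this surfaces.

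The correct source is $E_{m,m+1}$ (equivalently, the formal expression $E_{m+1,m+1}$ restricted to dimension~$m$): an $(m+1)$-index $\varepsilon$ contracted against $(m+1)/2$ curvature factors. This vanishes in dimension~$m$ because, as noted immediately after Equation~(\ref{eqn-1.a}), $\varepsilon_{j_1\dots j_n}^{i_1\dots i_n}=0$ whenever $n>m$. Expanding $\varepsilon_{j_1\dots j_{m+1}}^{i_1\dots i_{m+1}}=\det(\delta_{j_\mu}^{i_\nu})$ as a universal polynomial and reducing via the curvature symmetries of Equation~(\ref{eqn-2.a}) then yields exactly the listed expressions: $E_{1,2}$ gives~(1), $E_{3,4}$ gives~(2), $E_{5,6}$ gives~(3). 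With this correction (replace $S_5$ by $S_6$ and use three curvature factors in step~(iii)), the rest of your plan---including the suggestion to fix the coefficients by evaluation on model metrics rather than by a brute-force $720$-term expansion---goes through.
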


Let $n$ be even. If $\sigma$ is a permutation and if $R$ is the
curvature tensor, let
$$P_{n,\sigma}(R):=g^{i_1i_2}\cdot\cdot\cdot g^{i_{2n-1}i_{2n}}
R_{i_{\sigma_1}i_{\sigma_2}i_{\sigma_3}i_{\sigma_4}}\cdot\cdot\cdot
R_{i_{\sigma_{2n-3}}i_{\sigma_{2n-2}}i_{\sigma_{2n-1}}i_{\sigma_{2n}}}
\,.$$
If $\mathcal{C}=\{C_\sigma\}$ is a collection of constants,
let:
$$
P_{n,\mathcal{C}}:
=\sum_{\sigma\in\operatorname{Perm}(4\ell)} c_\sigma
P_{n,\sigma}\,.
$$
Let $(p,q)$ be arbitrary.
H. Weyl's first theorem of invariants, when applied to the curvature tensor, yields
that if $P\in\mathcal{I}_{m,n}^{(p,q)}$ is a scalar
invariant of the curvature tensor of degree $n$ which does not involve
the covariant derivatives of the curvature tensor,
then $P=P_{n,\mathcal{C}}$ for some
$\mathcal{C}$; if $P\ne0$, necessarily $n$ is even. Similar expressions appear
 if the covariant derivatives of $R$
are involved; the notation becomes more complicated and as we shall not
in any event need such expressions, we omit details in the interests of brevity.

We say that $P_{n,\mathcal{C}}$ is a {\it universal curvature
identity in signature $(p,q)$} if $P_{n,\mathcal{C}}$ vanishes for
all pseudo-Riemannian metrics of signature $(p,q)$. The following is
a useful observation  \cite{GPS12}; it shows only the
dimension is relevant.
\begin{theorem}\label{thm-2.2}
If $P_{n,\mathcal{C}}$ is a universal scalar curvature identity in the curvature tensor
 in signature $(p,q)$,
then $P_{n,\mathcal{C}}$ is a universal scalar curvature
in any other signature $(\tilde p,\tilde q)$ if $p+q=\tilde p+\tilde q$.
\end{theorem}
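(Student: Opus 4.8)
The plan is to reduce the statement to a purely algebraic fact about the curvature tensor at a single point, using the fact that the hypothesis and conclusion are both linear in the collection $\mathcal{C}$ and are ``pointwise'' statements. First I would observe that, by the discussion in Section~\ref{sect-2.1}, evaluating $P_{n,\mathcal{C}}$ at a point $x_0$ of a pseudo-Riemannian manifold $(M,g)$ of signature $(p,q)$ in geodesic polar coordinates centered at $x_0$ reduces the problem to a universal polynomial expression in the components $R_{ijkl}$ of an abstract algebraic curvature tensor on a vector space $V$ of dimension $m=p+q$ equipped with an inner product of signature $(p,q)$; the contractions $g^{i_1i_2}$ in $P_{n,\sigma}(R)$ are carried out with respect to that inner product. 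Thus $P_{n,\mathcal{C}}$ being a universal curvature identity in signature $(p,q)$ is equivalent to the statement that the associated universal polynomial in the entries of an algebraic curvature tensor vanishes identically on the space of algebraic curvature tensors over $(\mathbb{R}^{p+q}, \langle\cdot,\cdot\rangle_{(p,q)})$.

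The key step is then to pass between signatures. I would realize an algebraic curvature tensor of signature $(\tilde p,\tilde q)$ from one of signature $(p,q)$ by a complexification/Wick-rotation argument. Concretely, fix a basis in which the inner product of signature $(p,q)$ is diagonal with entries $\xi_i=\pm 1$; to move to signature $(\tilde p,\tilde q)$ one multiplies some of the basis covectors by $\sqrt{-1}$. Under such a substitution $e^i \mapsto \epsilon_i e^i$ with $\epsilon_i\in\{1,\sqrt{-1}\}$, an algebraic curvature tensor $R$ with the symmetries of the Riemann tensor is carried to another such tensor $\tilde R_{ijkl}=\epsilon_i\epsilon_j\epsilon_k\epsilon_l R_{ijkl}$, and the raised-index contractions transform compatibly because $g^{ii}$ picks up a factor $\epsilon_i^{-2}=\pm 1$ that matches the sign change in the metric. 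One checks that each monomial $P_{n,\sigma}(R)$ is homogeneous of the same multidegree in each index slot, so the net effect of the substitution is an overall nonzero constant (a power of $\sqrt{-1}$ which is actually $\pm 1$ since every index appears an even number of times in a fully contracted expression): $P_{n,\sigma}(\tilde R) = c_\sigma' \, P_{n,\sigma}(R)$ with a common nonzero $c_\sigma'$. Hence if $P_{n,\mathcal{C}}$ vanishes for all curvature tensors over the signature-$(p,q)$ space, it vanishes for all curvature tensors over the signature-$(\tilde p,\tilde q)$ space as well, since the space of algebraic curvature tensors is the same real vector space up to this invertible rescaling of coordinates.

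The main obstacle, and the place that needs care, is making the rescaling argument honest: one must verify that every algebraic curvature tensor for signature $(\tilde p,\tilde q)$ is hit by the substitution applied to \emph{some} algebraic curvature tensor of signature $(p,q)$ (i.e. that the map $R\mapsto\tilde R$ is a bijection on the relevant spaces of algebraic curvature tensors), and that the universal polynomial $P_{n,\mathcal{C}}$, being built from $R$ only by index contractions with the metric, transforms exactly by a single common scalar and not in some mixed fashion. This amounts to a bookkeeping check that, for a fully contracted monomial, each index $i$ occurs in exactly two slots so that the factor $\epsilon_i^{2}$ from the two $R$-entries is cancelled by the factor $\epsilon_i^{-2}$ coming from the $g^{ii}$ used to contract them; after this cancellation nothing depends on the choice of $\epsilon$, so $P_{n,\sigma}(\tilde R)=P_{n,\sigma}(R)$ identically and the theorem follows. (Alternatively, one can phrase the bijectivity and invariance through the observation that the complex orthogonal group $O(m,\mathbb{C})$ acts transitively enough on complexified curvature tensors that a polynomial vanishing on the real points of one real form vanishes on the real points of every real form; but the elementary rescaling argument above suffices and avoids invoking complex algebraic geometry.)
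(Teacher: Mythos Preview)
Your approach matches what the paper invokes: the paper omits a proof here and simply refers to \cite{GPS12}, where one ``first passed to the algebraic setting and then used analytic continuation,'' and your reduction to algebraic curvature tensors at a point followed by the Wick-rotation argument is precisely that strategy.

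One point deserves sharpening. The rescaling $R_{ijkl}\mapsto\epsilon_i\epsilon_j\epsilon_k\epsilon_l R_{ijkl}$ with $\epsilon_i\in\{1,\sqrt{-1}\}$ is \emph{not} a bijection between the real spaces of algebraic curvature tensors in the two signatures (for instance $\tilde R_{1234}$ may be purely imaginary when exactly one $\epsilon$ equals $\sqrt{-1}$), so your sentence about ``the same real vector space up to this invertible rescaling'' is not literally correct. What your cancellation computation actually shows is that the two real polynomials $Q(R)=P_{n,\mathcal{C}}(R;g)$ and $\tilde Q(R)=P_{n,\mathcal{C}}(R;\tilde g)$ in the variables $R_{ijkl}$ satisfy $\tilde Q\circ\Phi=Q$ for an invertible $\mathbb{C}$-linear map $\Phi$ on the complexified curvature space; since a real polynomial vanishing on all of $\mathbb{R}^N$ is the zero polynomial (hence vanishes on $\mathbb{C}^N$), $Q\equiv0$ forces $\tilde Q\equiv0$. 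This passage through $\mathbb{C}$ is exactly the ``analytic continuation'' step the paper alludes to, and your parenthetical remark about $O(m,\mathbb{C})$ already has the right idea, so the argument is sound once this is made explicit rather than treated as something the ``elementary rescaling argument \dots\ avoids.''
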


\begin{remark} \rm
A similar assertion holds for elements of
$\mathcal{I}_{m,n}^{(p,q),2}$. If we consider polynomials in
$\{L,R\}$, a similar assertion holds for elements of
$\mathcal{I}_{m,n}^{(p,q),\partial M}$ and
$\mathcal{I}_{m,n}^{(p,q),2,\partial M}$. In the discussion of
\cite{GPS12}, we first passed to the algebraic setting and then used
analytic continuation. The reason to avoid dealing with the
covariant derivatives of the curvature tensor was the relation
\begin{eqnarray*}
&&R_{ijkl;uv}-R_{ijkl;vu}\\
&=&R_{uviw}R_{wjkl}+R_{uvjw}R_{iwkl}+R_{uvkw}R_{ijwl}+R_{uvlw}R_{ijkw}\,.
\end{eqnarray*}
This is quadratic in the curvature.
Thus the space of possible tensors is not a linear space.
As we shall not need Theorem~\ref{thm-2.2}, we shall omit the proof and present it
simply for the sake of completeness.
\end{remark}

There are {\it trivial} identities that arise from the curvature symmetries
\begin{equation}\label{eqn-2.a}
R_{ijkl}=-R_{jikl}=R_{klij}\quad\text{and}\quad
    R_{ijkl}+R_{jkil}+R_{kijl}=0\,.
\end{equation}
Thus, for example, $R_{ijji}+R_{ijij}=0$ is a universal curvature identity;
this expression defines the zero local formula. The
identities of Lemma~\ref{lem-2.5} do not arise in this fashion; they are dimension
specific. These local formulas are zero in dimensions $\{1,3,5\}$ but are non-zero in
dimensions $\{2,4,6\}$, respectively.

\subsection{The restriction map}\label{sect-2.4}
We introduce some additional notation to describe universal curvature identities
which are dimension specific. Fix a signature $(p,q)$.
Let $P\in\mathcal{I}_{m,n}^{(p,q)}$. Set
$s_-(p,q)=(p-1,q)$ and $s_+(p,q)=(p,q-1)$. If $p=0$, then set $r_-(P)=0$ and if
$q=0$, then set $r_+(P)=0$ since there are no manifolds of this signature.
Otherwise, let
$(N_\pm^{m-1},g_N)$ be an $m-1$ dimensional pseudo-Riemannian manifold
of signature $s_\pm(p,q)$. Define $r_\pm(P)\in\mathcal{I}_{m-1,n}^{s_\pm(p,q)}$
by setting:
$$r_\pm(P)(N^{m-1},g_N)(x)=P(N^{m-1}\times S^1,g_N\pm d\theta^2)(x,\theta)\,.$$
The particular angle $\theta\in S^1$ is irrelevant as $S^1$ is a homogeneous
space. This yields an invariant in dimension $m-1$ of the appropriate signature and
defines maps:
$$r_-:\mathcal{I}_{m,n}^{(p,q)}\rightarrow\mathcal{I}_{m-1,n}^{(p-1,q)}
\quad\text{and}\quad
r_+:\mathcal{I}_{m,n}^{(p,q)}\rightarrow\mathcal{I}_{m-1,n}^{(p,q-1)}\,.
$$
We use a similar construction to define
$$\begin{array}{ll}
r_-:\mathcal{I}_{m,n}^{(p,q),2}\rightarrow
          \mathcal{I}_{m-1,n}^{(p-1,q),2},&
r_+:\mathcal{I}_{m,n}^{(p,q),2}\rightarrow
             \mathcal{I}_{m-1,n}^{(p,q-1),2},\\
r_-:\mathcal{I}_{m,n}^{(p,q),\partial M}\rightarrow
           \mathcal{I}_{m-1,n}^{(p-1,q),\partial M},&
r_+:\mathcal{I}_{m,n}^{(p,q),\partial M}\rightarrow
             \mathcal{I}_{m-1,n}^{(p,q-1),\partial M},\\
\vphantom{\vrule height 11pt}
r_-:\mathcal{I}_{m,n}^{(p,q),2,\partial M}\rightarrow
          \mathcal{I}_{m-1,n}^{(p-1,q),2,\partial M},&
r_+:\mathcal{I}_{m,n}^{(p,q),2,\partial M}\rightarrow
             \mathcal{I}_{m-1,n}^{(p,q-1),2,\partial M}.
\vphantom{\vrule height 11pt}\end{array}$$
We emphasize that if $p=0$, then $r_-=0$ and if $q=0$, then $r_+=0$ since
there are no manifolds of the indicated signature.

 Elements of $\mathcal{I}_{m,n}^{(p,q)}$ are defined by summations that range
 from $1$ to $m$; the corresponding elements of
 $\mathcal{I}_{m-1,n}^{(p,q)}$
 are defined by restricting the range of the summation. For example:
 \smallbreak\centerline{
$\displaystyle\tau_m=\sum_{i,j=1}^mR_{ijji}$ and
$\displaystyle\tau_{m-1}=\sum_{i,j=1}^{m-1}R_{ijji}$.}
\smallbreak\noindent Thus $r(\tau_m)=\tau_{m-1}$ since the form of
the invariant is the same. The scalar curvature is universal in this
sense - it is invariant under restriction. Furthermore, it is now
clear that the restriction of a universal curvature identity in
dimension $m$ generates a corresponding universal curvature identity
in dimension $m-1$. One has \cite{G73,G75,GPS12}:

\begin{theorem}\label{thm-2.3}
Adopt the notation established above.
\begin{enumerate}
\item If $n<m$, then
$\ker\{r_+\}\cap\ker\{r_-\}\cap\mathcal{I}_{m,n}^{(p,q)}=\{0\}$.
\smallbreak\item If $m$ is odd, then $\mathcal{I}_{m,m}^{(p,q)}=\{0\}$.
If $m$ is even, then the invariant $\{E_{m,m}^{(p,q)}\}$
of Equation~(\ref{eqn-1.b}) is a basis for
 $\ker\{r_+\}\cap\ker\{r_-\}\cap\mathcal{I}_{m,m}^{(p,q)}$.
\smallbreak\item If $n<m-1$, then
$\ker\{r_+\}\cap\ker\{r_-\}\cap\mathcal{I}_{m,n}^{(p,q),2}=\{0\}$.
\smallbreak\item If $m$ is even, then
$\mathcal{I}_{m,m-1}^{(p,q),2}=\{0\}$. If $m$ is odd, then the
invariant $\{\mathcal{E}_{m,m-1}^{(p,q),2}\}$ of
Equation~(\ref{eqn-1.c}) is a basis for
$\ker\{r_+\}\cap\ker\{r_-\}\cap\mathcal{I}_{m,m-1}^{(p,q),2}$.
\smallbreak\item If $n<m-1$, then
$\ker\{r_+\}\cap\ker\{r_-\}\cap\mathcal{I}_{m,n}^{(p,q),\partial
M}=\{0\}$.
\smallbreak\item The invariants
$\{F_{m,m-1,\nu}^{(p,q),\partial M}\}$ of Equation~(\ref{eqn-1.d})
for $0\le 2\nu\le m-1$ are a basis for
$\ker\{r_+\}\cap\ker\{r_-\}\cap\mathcal{I}_{m,m-1}^{(p,q),\partial
M}$.
\end{enumerate}
\end{theorem}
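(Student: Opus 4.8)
The strategy is the same ``invariance-theory with restriction maps'' argument that underlies the heat-equation proof of the Gauss--Bonnet theorem in \cite{G73}, adapted to the four relevant spaces of invariants. Throughout one works in the positive-definite case $(0,m)$ by Theorem~\ref{thm-2.2} (and its boundary analogue noted in the Remark), since a universal identity in one signature is universal in all signatures of the same total dimension. First I would set up the combinatorial bookkeeping: by H. Weyl's first theorem of invariants every $P\in\mathcal I_{m,n}^{(0,m)}$ (resp.\ its $2$-tensor or boundary versions) is a linear combination of complete contractions of monomials in the covariant derivatives of $R$ (and, on the boundary, in the tangential covariant derivatives of $L$), and the degree count pins down the number of curvature/second-fundamental-form factors and derivatives. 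For each such monomial $M$ one picks out a distinguished index, say the label $m$ (or the normal index $1$ on the boundary), and observes that evaluating $P$ on the product $N^{m-1}\times S^1$ with the flat $d\theta^2$ direction precisely kills every monomial in which that distinguished index actually appears on a curvature or $L$ factor, because all components of $R$ and $L$ with an index in the flat $S^1$ direction vanish, while the monomials in which it does not appear survive and reproduce the ``same'' invariant one dimension down. This is the content of the informal discussion preceding the theorem ($r(\tau_m)=\tau_{m-1}$); I would formalize it as: $r_\pm$ is, on the level of the standard spanning monomials, the projection onto those monomials whose index set omits the distinguished index.

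With that in hand, Assertions (1), (3), (5) all have the same shape: if $P$ lies in $\ker r_+\cap\ker r_-$ then in the $(0,m)$ case $r_+(P)=0$ says that the piece of $P$ supported on monomials not using the index $m$ vanishes; but by the $O(m)$-symmetrization one may assume the coefficients of $P$ are symmetric under permuting the $m$ indices, so if the ``omit-$m$'' part is zero then, since $n<m$ (resp.\ $n<m-1$ in the $2$-tensor and scalar-boundary cases, because the free symmetric pair already consumes available index slots), \emph{every} monomial appearing in $P$ can be re-indexed so as to omit $m$, forcing $P=0$. The sharper dimension thresholds $n<m-1$ in (3) and (5) come from the fact that a symmetric-$2$-tensor invariant, or a boundary invariant (which cannot use the normal index $1$ on $L$-factors and has the tangential range $2,\dots,m$), effectively uses one more index than its scalar interior counterpart, so one needs one more dimension of ``room'' to carry out the re-indexing.

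Assertions (2), (4), (6) are the boundary-case and critical-degree refinements: here $n=m$ (interior scalar), $n=m-1$ (interior $2$-tensor), or $n=m-1$ (boundary scalar), so the re-indexing argument no longer forces $P=0$; instead $\ker r_+\cap\ker r_-$ is at most one-dimensional in cases (2),(4) — the unique invariant all of whose spanning monomials necessarily use \emph{every} index — and at most $\lfloor m/2\rfloor$-dimensional in case (6), one dimension for each admissible number $\nu$ of curvature factors. One then exhibits the Pfaffian $E_{m,m}^{(p,q)}$, its $2$-tensor partner $\mathcal E_{m,m-1}^{(p,q),2}$, and the boundary forms $F_{m,m-1,\nu}^{(p,q),\partial M}$ as elements of the respective kernels — for the Pfaffian this is immediate from the $\varepsilon^{i_1\cdots i_m}_{j_1\cdots j_m}$ factor, which is nonzero only when the indices are a permutation of $1,\dots,m$, so plugging in a flat direction annihilates it — and shows they are nonzero (e.g.\ on a product of $2$-spheres, resp.\ a geodesic ball) and linearly independent (the $F_{m,m-1,\nu}$ are distinguished by their number of $L$ versus $R$ factors). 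Matching the dimension upper bound to the span of these explicit invariants closes each case. The parity statement ``$m$ odd $\Rightarrow \mathcal I_{m,m}^{(p,q)}=\{0\}$'' in (2), and ``$m$ even $\Rightarrow \mathcal I_{m,m-1}^{(p,q),2}=\{0\}$'' in (4), follow because a nonzero complete contraction of $\bar n$ curvature factors needs $4\bar n=2n$ indices paired into $n$ contractions, but the critical-degree constraint plus the distinct-index requirement forces $n$ to have the right parity; the odd case simply admits no such monomial.

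The main obstacle I expect is \textbf{not} the interior scalar cases (1)–(2), which are essentially \cite{G73,G75}, but rather the bookkeeping in the boundary $2$-tensor / scalar-boundary settings: one must carefully account for the asymmetry between the normal index $1$ and the tangential indices $a,b\in\{2,\dots,m\}$ (the tangential Levi-Civita connection differs from the ambient one, and $L$ is a purely tangential tensor), verify that the restriction map $r_\pm$ still has the clean ``omit an index'' description on a spanning set compatible with that splitting, and check that the dimension thresholds ($n<m-1$, and the count $0\le 2\nu\le m-1$ in (6)) are exactly right rather than off by one. Making the reduction to the positive-definite case rigorous for polynomials in $\{L,R\}$ — i.e.\ that there is an algebraic formulation to which analytic continuation in the signature applies — is the other place where care is needed; as the Remark after Theorem~\ref{thm-2.2} indicates, this works precisely because one avoids covariant derivatives of $R$ here, so the relevant space of coefficient tensors is genuinely linear.
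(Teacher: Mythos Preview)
The paper does not prove Theorem~\ref{thm-2.3}; it is quoted from \cite{G73,G75,GPS12} as a summary of prior work, so there is no in-paper proof to compare against directly. Your outline is broadly faithful to the method of those references and closely parallels the paper's own proof of the companion result, Theorem~\ref{thm-2.4}: pass to a monomial basis in the metric-jet variables, use $r(P)=0$ to force every monomial of $P$ to involve the distinguished index, use orthogonal invariance to spread this to every index, and then count index slots against the degree.

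One substantive divergence: you propose to reduce at the outset to the Riemannian case via Theorem~\ref{thm-2.2}. The paper explicitly declines to use Theorem~\ref{thm-2.2} (see the Remark following it), and as stated that theorem covers only polynomials in $R$ without covariant derivatives, so it does not directly apply to the full spaces $\mathcal I_{m,n}^{(p,q)}$, $\mathcal I_{m,n}^{(p,q),2}$, $\mathcal I_{m,n}^{(p,q),\partial M}$. The argument actually carried out in the paper for Theorem~\ref{thm-2.4} (and in the cited references for Theorem~\ref{thm-2.3}) works in signature $(p,q)$ directly: $r_+(P)=0$ forces every \emph{spacelike} index to appear in every monomial, $r_-(P)=0$ handles the \emph{timelike} indices, and that is precisely why the hypothesis is $\ker r_+\cap\ker r_-$ rather than a single restriction.

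There is also a small gap in your index-counting step. From ``every index appears'' you only get $\deg_a(A)\ge 1$, hence $m\le 2n$, which is too weak. The missing ingredient is invariance under the reflections $e_a\mapsto -e_a\in\mathcal O$, which forces $\deg_a(A)$ to be even, hence $\deg_a(A)\ge 2$; then $2m\le\sum_a\deg_a(A)\le 2n$ gives the sharp threshold $n\ge m$ (and its shifted versions for (3) and (5)). This is exactly the inequality chain the paper writes out in the proof of Theorem~\ref{thm-2.4}. Finally, a minor slip: in case~(6) the number of admissible $\nu$ with $0\le 2\nu\le m-1$ is $\lfloor(m-1)/2\rfloor+1$, which differs from your $\lfloor m/2\rfloor$ when $m$ is odd.
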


\subsection{Symmetric 2-tensor valued invariants on the boundary}\label{sect-2.5}
The following result provides a characterization of the invariants
$\{\mathcal{F}_{m,n,\nu}^{(p,q),2,\partial M}\}$
which appear in Theorem~\ref{thm-1.4}.
It is the appropriate extension of Theorem~\ref{thm-2.3}
to this setting and is the second main result of this paper:

\begin{theorem}\label{thm-2.4}
Adopt the notation established above.
\begin{enumerate}
\item If $n<m-2$, then
$\ker\{r_+\}\cap\ker\{r_-\}\cap\mathcal{I}_{m,n}^{(p,q),2,\partial
M}=\{0\}$. \smallbreak\item The invariants
$\{\mathcal{F}_{m,m-2,\nu}^{(p,q),2,\partial M}\}$ of
Equation~(\ref{eqn-1.e}) for $0\le 2\nu\le m-2$ are a basis
for
$\ker\{r_+\}\cap\ker\{r_-\}\cap\mathcal{I}_{m,m-2}^{(p,q),2,\partial
M}$.
\end{enumerate}\end{theorem}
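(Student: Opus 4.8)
The plan is to reduce the problem to the interior/boundary scalar cases already covered by Theorem~\ref{thm-2.3} together with an appropriate ``integration'' procedure. First I would set up notation: given $P\in\mathcal{I}_{m,n}^{(p,q),2,\partial M}$, decompose $P$ according to the normal-tangential splitting of the symmetric $2$-tensor, writing $P = P_{11}\,e^1\circ e^1 + P_{1a}\,e^1\circ e^a + P_{ab}\,e^a\circ e^b$, where $P_{11}$ is a scalar boundary invariant, the $P_{1a}$ transform as a tangential $1$-form, and $P_{ab}$ is a tangential symmetric $2$-tensor. Since $e_1$ is the unit geodesic normal, the decomposition is $O(p',q')$-equivariant (where $(p',q')$ is the signature of $\partial M$), so one may analyze each piece separately. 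The maps $r_\pm$ respect this decomposition because the product construction $g_N\pm d\theta^2$ adjoins a flat tangential direction $\theta$ and does not touch the normal $e_1$; so $r_\pm P = 0$ forces $r_\pm P_{11}=0$, $r_\pm P_{1a}=0$, $r_\pm P_{ab}=0$ on $\partial M$. This much is bookkeeping.

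The key technical step is to relate tensor-valued boundary invariants in the kernel to scalar-valued ones. For the $P_{11}$ component: $P_{11}\in\mathcal{I}_{m,n}^{(p,q),\partial M}$ with $r_\pm P_{11}=0$, so Theorem~\ref{thm-2.3}(5)--(6) applies directly — it vanishes if $n<m-1$ and is spanned by $\{F_{m,m-1,\nu}^{(p,q),\partial M}\}$ if $n=m-1$; but here $n\le m-2<m-1$, so $P_{11}=0$ automatically. For the $P_{1a}$ and $P_{ab}$ components I would contract with an auxiliary tangential vector, or better, contract $P_{ab}$ with $e^a\circ e^b$ and also consider the ``fully traced'' scalars to recover enough scalar invariants; but the cleaner route is the polarization/restriction argument used in \cite{G73,GPS12}: introduce a formal index and show that a tensor-valued invariant $P$ lying in $\ker r_+\cap\ker r_-$ in dimension $m$, degree $n$, corresponds to a scalar invariant in dimension roughly $m$, degree $n+2$ (the two extra slots coming from the $\circ$), lying in the analogous kernel — this is exactly the device that turns $\mathcal{E}$ into $E$ and is the content cited from \cite{GPS11,GPS12} for the interior case (Theorem~\ref{thm-2.3}(3)--(4)). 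Then Theorem~\ref{thm-2.3}(6) identifies the relevant scalar kernel in degree $m-1$ as spanned by the $F_{m,m-1,\nu}$, and ``de-polarizing'' these yields precisely the $\mathcal{F}_{m,m-2,\nu}^{(p,q),2,\partial M}$; the degree count $n<m-2$ versus $n+2<m$ gives assertion (1), and $n=m-2$ versus $n+2=m$ gives assertion (2). One must also check, for (2), that the listed invariants are genuinely independent and actually lie in $\ker r_+\cap\ker r_-$: independence follows by evaluating on products of spheres and the hyperbolic disc (varying $\nu$ changes the ratio of curvature to second-fundamental-form contributions), and membership in the kernel follows because $\mathcal{F}$ is built from the Pfaffian-type $\varepsilon$ symbol on $m-1$ boundary indices, which annihilates any added flat direction — the same mechanism that puts $F_{m,m-1,\nu}$ in the scalar kernel.

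The main obstacle I anticipate is the ``de-polarization'' bookkeeping together with the fact (flagged in the Remark after Theorem~\ref{thm-2.2}) that tangential covariant derivatives of $L$ and of $R$ enter, and the second Bianchi-type commutation formula makes the space of candidate tensors non-linear; one cannot simply quote Weyl's theorem on a vector space. This is presumably why the authors defer a technical lemma to Section~\ref{sect-4} and adopt the heat-equation-style formalism of \cite{G73} rather than \cite{W46}. Concretely, the hard part is establishing that the restriction map is \emph{surjective} onto the lower-dimensional space in the relevant degrees — equivalently, computing $\dim\mathcal{I}_{m,n}^{(p,q),2,\partial M}$ minus $\dim(\mathrm{image})$ and matching it against $\dim\ker$ — and handling the normalization constants (the $\operatorname{Vol}(S^{n-1-2\nu})$ and $(8\pi)^\nu\nu!$ factors) so that the de-polarized basis comes out exactly as $\{\mathcal{F}_{m,m-2,\nu}^{(p,q),2,\partial M}\}$ and not merely proportional to it term-by-term with $\nu$-dependent constants. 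I expect the proof to proceed by: (a) splitting $P$ as above; (b) killing $P_{11}$ and $P_{1a}$ by Theorem~\ref{thm-2.3}; (c) reducing the $P_{ab}$ part to a scalar kernel computation via the polarization device and the Section~\ref{sect-4} lemma; (d) invoking Theorem~\ref{thm-2.3}(5)--(6); and (e) checking the explicit basis and constants by evaluation on product geometries.
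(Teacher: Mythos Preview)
Your approach differs substantially from the paper's and contains a genuine gap. The paper does \emph{not} decompose $P$ into $P_{11},P_{1a},P_{ab}$ and then reduce each piece to Theorem~\ref{thm-2.3}; instead it runs a single index-counting argument directly on the monomials of $P$. Writing each monomial in adapted coordinates as $A=g_{a_1b_1/1}\cdots g_{a_kb_k/1}\,g_{i_1j_1/\alpha_1}\cdots g_{i_\ell j_\ell/\alpha_\ell}\,e^i\circ e^j$, the hypothesis $r_+P=r_-P=0$ forces every tangential index $2\le a\le m$ to appear in $A$, and counting indices gives $2(m-1)\le 2+2n$. This proves Assertion~(1) immediately, and in the critical case $n=m-2$ the equalities force $\deg_1(A)=0$ and $|\alpha_\nu|=2$, so $P$ automatically has the form $P_{ab}(L,R^{\partial M})\,e^a\circ e^b$ with no normal index and no covariant derivatives---your separate treatment of $P_{11}$ via Theorem~\ref{thm-2.3}(5) is correct but rendered unnecessary, and you never have to confront $P_{1a}$ at all. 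The dimension of the resulting space of ``admissible'' polynomials is then computed by the exchange lemma in Section~\ref{sect-4} (Lemma~\ref{lem-4.2}), and linear independence of the $\mathcal{F}_{m,m-2,\nu}^{(p,q),2,\partial M}$ is checked by pairing with $\pm d\theta^2$ on a product $N\times S^1$ and reducing to the known independence of the $F_{m-1,m-2,\nu}^{(p,q),\partial N}$.

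The gap in your plan is the ``polarization'' step: you never make concrete how a symmetric $2$-tensor boundary invariant in $\ker r_+\cap\ker r_-$ of degree $n$ becomes a \emph{scalar} boundary invariant in the corresponding kernel, and your degree bookkeeping is off. The threshold shifts by one, not two: boundary scalars have critical degree $m-1$ (Theorem~\ref{thm-2.3}(5)--(6)) while boundary $2$-tensors have critical degree $m-2$, so the correspondence you want would be $n\leftrightarrow n+1$, not $n\leftrightarrow n+2$; your check ``$n<m-2$ versus $n+2<m$'' is pointing at the \emph{interior} scalar threshold, which is not what Theorem~\ref{thm-2.3}(6) gives. More seriously, there is no off-the-shelf lemma in the paper or its references that converts a tangential $2$-tensor (or the $1$-form $P_{1a}$) into a scalar invariant in a way that preserves membership in $\ker r_\pm$ and is injective---contracting with $L_{ab}$ or the metric loses information, and introducing an auxiliary tensor $h$ takes you outside the invariant spaces Theorem~\ref{thm-2.3} controls. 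Absent a precise mechanism here, the reduction to Theorem~\ref{thm-2.3} does not go through, and one is forced back to a direct argument of the type the paper carries out.
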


\begin{proof} Let $0\ne P\in\mathcal{I}_{m,n}^{(p,q),2,\partial M}$,
i.e. $P$ defines a non-zero local formula of degree $n$ on the
boundary in signature $(p,q)$. Express $P$ as a polynomial in the
derivatives of the metric. Fix a point $Y\in\partial M$. Recall that $e_1$ is the inward pointing geodesic normal vector
field. Choose
geodesic polar coordinates $(x_2,...,x_m)$ on the boundary centered
at $Y$ and then use the exponential map
$$\vec x\rightarrow
\exp_{(x_2,...,x_m)}(x_1e_1(x_2,...,x_m))$$
to define coordinates $(x_1,...,x_m)$ near $Y$.
Let $e_i:=\partial_{x_i}(Y)$; we choose the coordinates on the
boundary so that these form an orthonormal basis for $T_YM$ for
$1\le i\le m$ and the $e_i$ for $2\le i\le m$ form an orthonormal
basis for $T_Y(\partial M)$. We normalize the orthonormal basis so
that the indices $\{e_2,...,e_{\tilde q}\}$  are timelike and the
indices $\{e_{\tilde q+1},...,e_m\}$ are spacelike;  here $(\tilde
p,\tilde q)=(p-1,q)$ if the normal vector $e_1$ is timelike and
$(\tilde p,\tilde q)=(p,q-1)$ if the normal vector $e_1$ is spacelike. We
then have the relations:
$$
g_{1a}\equiv 0,\quad
g_{ab}(Y)=\pm\delta_{ab},\quad g_{ab/c}(Y)=0,\quad g_{ab/1}(Y)=L_{ab}\,.
$$

Let $P=P_{ij}e^i\circ e^j\in\mathcal{I}_{m,n}^{(p,q),2,\partial M}$.
Express $P_{ij}$ as a polynomial
in terms of the derivatives of the metric.
Let $A$ be a monomial of $P_{ij}$. After taking into account the normalizations
given above, we may express
\begin{eqnarray*}
&&A=g_{a_1b_1/1}...g_{a_kb_k/1}g_{i_1j_1/\alpha_1}...g_{i_\ell
j_\ell/\alpha_\ell} e^i\circ e^j
\quad\text{for}\\
&&n=k+|\alpha_1|+...+|\alpha_\ell|\quad\text{and}\quad |\alpha_i|\ge 2\text{ for }
1\le i\le\ell\,.
\end{eqnarray*}
If we assume that $r_+(P)=0$ and $r_-(P)=0$, then every
 index $a$ for $2\le a\le m$ must appear in $A$. In particular $\operatorname{deg}_a(A)\ge2$ for $2\le a\le m$. We count indices:
\begin{eqnarray*}
&&2(m-1)\le\sum_{a=2}^m\operatorname{deg}_a \{
g_{a_1b_1/1}...g_{a_kb_k/1}g_{i_1j_1/\alpha_1}...g_{i_\ell
j_\ell/\alpha_\ell}
e^i\circ e^j\}\\
&=&2+2k+\sum_{\nu=1}^\ell\sum_{a=2}^m
        \operatorname{deg}_a(g_{i_\nu j_\nu/\alpha_\nu})\le
        2+2k+\sum_{\nu=1}^\ell\sum_{i=1}^m
        \operatorname{deg}_i(g_{i_\nu j_\nu/\alpha_\nu})\\
&=&2+2k+2\ell+|\alpha_1|+...|\alpha_\ell|
\le2+ 2k+2(|\alpha_1|+...+|\alpha_\ell|)=2+2n\,.
\end{eqnarray*}
Consequently $m-1\le n+1$. This is, of course, not possible if $n<m-2$
which proves Assertion (1).

In the limiting case that $n=m-2$, all the inequalities given above must have been equalities. This implies that $\operatorname{deg}_1(A)=0$ and that $|\alpha_i|=2$
for $1\le i\le\ell$. Consequently,
\begin{equation}\label{eqn-2.b}
P=P_{ab}(g_{c_1c_2/1},g_{c_1c_2/c_3c_4})e^a\circ e^b\,.
\end{equation}
At the point in question, we can express the variables $g_{c_1c_2/c_3c_4}$ in
terms of the curvature $R^{\partial M}$.  Since $R^{\partial M}_{abcd}=R^M_{abcd}$
modulo quadratic terms in the second fundamental form,
$P=P_{ab}(L_{c_1c_2},R_{d_1d_2d_3d_4}^M)$.

We now return to Equation~(\ref{eqn-2.b}). We consider {symmetric
2-tensor valued} monomials of the form:
$$A=L_{a_1b_1}...L_{a_kb_k}g_{c_1d_1/e_1f_1}...g_{c_\ell d_\ell/e_\ell f_\ell}
e^u\circ e^v$$
where $k+2\ell=m-2$. We say that $A$ is
{\it admissible} if each index $2\le a\le m$ appears exactly twice
in $A$. Let $\mathcal{A}$ be the space of admissible monomials. Let
$c(A,P)$ be the coefficient of $A$ in $P$. We may then express
$$P=\sum_{A\in\mathcal{A}}c(A,P)\cdot A\,.$$
Clearly $P$ is invariant under the action of the subgroup of the
orthogonal group fixing {the normal vector $e_1$}; we let
$\mathcal{J}_{p,q}$ be the subspace of all such polynomials. Let
$\tilde m:=m-1$ be the dimension of the boundary. We will show
presently in Lemma~\ref{lem-4.2} that
\begin{equation}\label{eqn-2.c}
\dim\{\mathcal{J}_{p,q}\}=\left\{\begin{array}{lll}
1+\frac{\tilde m-1}2&\text{if }\tilde m\text{ is  odd}\\
1+\frac{\tilde m}2&\text{ if }\tilde m\text{ is even}\vphantom{\vrule height 11pt}
\end{array}\right\}\,.
\end{equation}
On the other hand the invariants
$\mathcal{F}_{m,m-2,\nu}^{(p,q),2,\partial M}$ give rise to
admissible polynomials and there are exactly this many of them. To
complete the proof, we must establish linear independence. This may
be done as follows. We have $n=m-2$. Take $(M,g)=(N^{m-1}\times
S^1,g_N\pm d\theta^2)$. It is then immediate
that
\begin{equation}
\label{eqn-2.d} g_M(\{\mathcal{F}_{m,m-2,\nu}^{(p,q),2,\partial
M}(g_M)\},\pm d\theta^2)= F_{m-1,m-2,\nu}^{(p,q),\partial N}(g_N)\,.
\end{equation}
 Since the invariants
$\{F_{m-1,m-2,\nu}\}$ are linearly independent local formulas, the
desired result follows.
\end{proof}

\section{The proof of Theorem~\ref{thm-1.4}}\label{sect-3}

\subsection{Euler Lagrange equations for a manifold with boundary}
We may integrate by parts to compute:
\begin{eqnarray*}
&&\partial_t\left.\left\{\int_M
E_{m,n}^{(p,q)}(g_t)dx_{g_t}
+\int_{\partial M}F_{m,n-1}^{(p,q),\partial M}(g_t)
dy_{g_t}\right\}\right |_{t=0}\\
&&\quad=\int_Mh_{ij}{Q^{(p,q),2}_{{m, n, ij}}}dx_g
+\sum_{k=0}^{n-1}\int_{\partial M}(\nabla_{e_1}^kh_{ij})
Q_{m,n-1-k,ij}^{(p,q),2,\partial M}dy_g\,.
\end{eqnarray*}
We examined $Q^{(p,q),2}_{m, n, ij}$ previously in Section ~\ref{sect-1.2}. Suppose first that $n=m-2$. If we consider
a product manifold of the form $(M,g_M)=(N\times S^1,g_N\pm
d\theta^2)$ and take the perturbation $h=h_N+0$, then the
Gauss-Bonnet theorem shows that the Euler-Lagrange equations are
trivial. Consequently
$$
r(Q^{(p,q),2}_{m, n, ij})=0\quad\text{and}\quad
r(Q_{m,m-2-k,ij}^{(p,q),2,\partial M})=0\,.
$$
Consequently,
by Theorem~\ref{thm-2.4},
$Q_{m,m-2-k,ij}^{(p,q),2,\partial M}=0$ for $k>0$ while there are universal
constants $d_{m,\nu}$ so that
$$Q_{m,m-2}^{(p,q),2,\partial M}=
\sum_\nu d_{m,\nu} \mathcal{F}_{m,m-2,\nu}^{(p,q),2,\partial
M}\,.$$ The precise normalizing constants can then be evaluated and
shown to be $\frac12$ by applying Equation~(\ref{eqn-2.d}) to the example
$$(M,g,h):=(N\times S^1,g_N\pm d\theta^2,\pm d\theta^2)\,.$$

 The point being that one uses the Gauss-Bonnet
theorem and notes that the variation of the volume element
$\partial_t dx_{g_t}=\frac12 dx_g$. This completes the proof of
Theorem~\ref{thm-1.4} if $n=m-2$.

Next suppose $n=m-3$. We consider
$$Q_{m,m-3}^{(p,q),2,\partial M}
-\frac12\sum_\nu  \mathcal{F}^{(p,q),2,\partial
M}_{{m,m-3},\nu}\,.$$
We use the case $n=m-2$ already established
to see this vanishes under $r_\pm$ and, hence, by
Theorem~\ref{thm-2.4}, this invariant vanishes. This completes the
proof if $n=m-3$. The general case now follows by induction. This
completes the proof of Theorem~\ref{thm-1.4}.\hfill\qed

\section{The algebraic context}\label{sect-4}
Section~\ref{sect-4} is devoted to establishing the estimate of
Equation~(\ref{eqn-2.c}) which was used in the proof of Theorem~\ref{thm-2.4}.
We work in a purely algebraic
context. In Section~\ref{sect-4.1}, we introduce the basic algebraic formalism.
 In Section~\ref{sect-4.2}, we define the relevant structure
groups. In Section~\ref{sect-4.3}, we prove the
Exchange Lemma -- this is a lemma related to orthogonal invariance.
In Section~\ref{sect-4.4}, we complete our discussion by deriving the estimate of Equation~(\ref{eqn-2.b}).

\subsection{Notational conventions}\label{sect-4.1}
Let $(V,\tilde g)$ be an inner product space of signature $(\tilde
p,\tilde q)$ and dimension $\tilde m:=\tilde p+\tilde q$; to relate
the discussion in this section to the results needed in
Section~\ref{sect-2}, we need only set $\tilde m=m-1$ and take
$V=T_Y(\partial M)$ and $\tilde g=g|_V$. We change notation slightly
and let all indices range from $1$ to $\tilde m$ rather than from
$2$ to $m=\tilde m+1$.  Let $\{e_a\}$ be an orthonormal basis for
$V$. Let $\tilde L_{ab}$ and $\tilde g_{ab/cd}$ be formal variables
where we impose the symmetries:
\begin{equation}\label{eqn-4.a}
\tilde L_{ab}=\tilde L_{ba},\quad
   \tilde g_{ab/cd}=\tilde g_{ba/cd}=\tilde g_{ab/dc}\,.
\end{equation}
Let
\begin{eqnarray*}
&&\tilde L:=\tilde L_{ab}e^a\circ e^b\in S^2(V^*),\\
&&\tilde D^2\tilde g:=\tilde g_{ab/cd}(e^a\circ
e^b)\otimes(e^c\circ e^d)\in S^2(V^*)\otimes S^2(V^*)\,.
\end{eqnarray*}
If $T$ is a linear transformation of $V$, then the natural
action of $T$ on $S^2(V^*)$ defines the action of $T$ on these variables. More
precisely, if $Te_a=T_a^be_b$, then
$$(T\tilde L)_{ab}=T_{a}^{\tilde a}T_{b}^{\tilde b}{\tilde L}_{\tilde a\tilde b}\quad\text{and}\quad
    (T\tilde D^2{\tilde g})_{ab/cd}=T_{a}^{\tilde a}T_{b}^{\tilde b}
    T_{c}^{\tilde c}T_{d}^{\tilde d}
        \tilde g_{\tilde a\tilde b/\tilde c\tilde d}\,.$$
In other words, we simply expand $\tilde L$ and $\tilde D^2{\tilde g}$ multi-linearly. This is, of course, exactly the usual
action of the general linear group on the second fundamental form and on the 2-jets of the
metric.

Consider a symmetric $2$-tensor valued monomial of the form:
$$A=\tilde L_{a_1b_1}...\tilde L_{a_kb_k}\tilde g_{c_1d_1/e_1f_1}...
\tilde g_{c_\ell d_\ell/e_\ell f_\ell} e^u\circ e^v\,.$$
We say an index $a$ {\it touches itself} in $A$ if $A$
is divisible by one of the variables $\tilde L_{aa}$, $\tilde g_{aa/\star\star}$,
$\tilde g_{\star\star/aa}$, or $e^a\circ e^a$.
Let $\delta$ be the Kronecker symbol. We let
\begin{eqnarray*}
&\operatorname{deg}_w(A):=&
\sum_{\mu=1}^k\{\delta_{a_\mu,w}+\delta_{b_\mu,w}\}
+\sum_{\nu=1}^\ell\{\delta_{c_\nu,w}+\delta_{d_\nu,w}+\delta_{e_\nu,w}+
\delta_{f_\nu,w}\}\\&&+\delta_{u,w}+\delta_{v,w},\\
&\operatorname{ord}_L(A):=&k,\quad\operatorname{ord}_{\tilde g}(A)=2\ell\,
\quad\operatorname{ord}(A):=k+2\ell,;
\end{eqnarray*}
$\operatorname{deg}_w(A)$ is the number of times that the index $w$ appears
in $A$. Motivated by the discussion of the proof of
 Theorem~\ref{thm-2.4}, we say that $A$ is
{\it admissible} if
$$\operatorname{deg}_a(A)=2\quad\text{for}\quad1\le a\le\tilde m\,.$$
Let $\mathcal{A}$ be the set of admissible monomials. If $A\in\mathcal{A}$,
then $1+k+2\ell=\tilde m$ so:
$$
\operatorname{ord}(A)=\operatorname{ord}_L(A)
+\operatorname{ord}_{\tilde g}(A)=k+2\ell=\tilde m-1\,.$$
Let $\mathcal{C}:=\{c(A,P)\}_{A\in\mathcal{A}}$ be a collection of constants.
We form the associated {\it admissible polynomial}:
$$P=P_{\mathcal{C}}:=\sum_{A\in\mathcal{A}}c(A,P)\cdot A\,.$$
We say that $A$ is a {\it monomial of $P$} if $c(A,P)\ne0$.
We may expand an admissible polynomial $P$ in the form:
\begin{equation}\label{eqn-4.b}
P=\sum_{k\equiv\tilde m-1\ (2)}P_k\quad\text{where}\quad
P_k=P_{k,\mathcal{C}}:=\sum_{A\in\mathcal{A},\operatorname{ord}_L(A)=k}
c(A,P)\cdot A\,.
\end{equation}
The symmetries
of Equation~(\ref{eqn-4.a}) mean that we can regard
$$P_k\in\otimes^{k+2\ell+1}S^2(V^*)\,.$$

\subsection{Structure groups}\label{sect-4.2}
Let $\operatorname{GL}$ be the general linear group of $V$ and
let $\mathcal{O}$ be the associated orthogonal group:
$$
\mathcal{O}:=\{T\in\operatorname{GL}:g(Tx,Ty)=g(x,y)
     \text{ for all }x,y\in V\, \}.
$$
For $1\le k\le\tilde m$, let
$$\mathcal{O}_k:=\{T\in\mathcal{O}:Te_b=e_b\text{ for all }b<k\}\,.
$$
If $\{a,b\}$ are distinct indices, let $V_{a,b}:=\operatorname{Span}\{e_a,e_b\}$.
If $V_{a,b}$ has signature $(2,0)$ or has signature $(0,2)$, we consider the rotations $T_{a,b}(\theta)\in\mathcal{O}$:
defined by setting:
$$T_{a,b}(\theta)e_c:=\left\{\begin{array}{rll}
\cos\theta e_a+\sin\theta e_b&\text{if}&c=a\\
-\sin\theta e_a+\cos\theta e_b&\text{if}&c=b\\
e_c&\text{if}&c\ne a\text{ and }c\ne b\end{array}\right\}\,.
$$
Similarly, if $V_{a,b}$ has signature $(1,1)$, we
consider the hyperbolic boosts $T_{a,b}(\theta)\in\mathcal{O}$
defined by setting:
$$T_{a,b}(\theta)e_c:=\left\{\begin{array}{rll}
\cosh\theta e_a+\sinh\theta e_b&\text{if}&c=a\\
\sinh\theta e_a+\cosh\theta e_b&\text{if}&c=b\\
e_c&\text{if}&c\ne a\text{ and }c\ne b
\end{array}\right\}\,.
$$
The transformations $\{T_{a,b}(\theta)\}$ for $\theta\in\mathbb{R}$ and $1\le a<b\le\tilde m$
generate the connected component of the identity of $\mathcal{O}$.

Each index appears exactly twice in any admissible monomial. If $V_{a,b}$ has signature $(1,1)$, then we replace
 the two `$a$' indices by `$\cosh\theta
a+\sinh\theta b$', we replace the two `$b$' indices by `$\sinh\theta a+
\cosh\theta b$', and we expand multi-linearly to determine $T_{a,b}(\theta)A$; if $V_{a,b}$ has signature
$(2,0)$ or $(0,2)$, then there are sign changes. We replace the two `$a$' indices by $\cos\theta
a+\sin\theta b$ and we replace the two `$b$' indices by $-\sin\theta
a+\cos\theta b$ before expanding multi-linearly. Thus each admissible
monomial gives rise to 16 different monomials which must be combined
and simplified in computing the action of $T_{a,b}(\theta)$ on an
admissible polynomial. We do not sum over repeated induces in what
follows in the remainder of Section~\ref{sect-4.2}. If $V_{a,b}$ has signature
$(1,1)$, we have:
\begin{eqnarray*}
T_{a,b}(\theta)\tilde g_{aa/bb}&=&
\cosh^4\theta\tilde g_{aa/bb}
+\cosh^3\theta\sinh\theta(2\tilde g_{ab/bb}+2\tilde g_{aa/ab})\\
&+&\cosh^2\theta\sinh^2\theta(\tilde g_{aa/aa}+\tilde g_{bb/bb}
+ 4\tilde g_{ab/ab})\\
&+&\cosh\theta\sinh^3\theta(2\tilde g_{ab/aa}+2\tilde g_{bb/ab})
+\sinh^4\theta\tilde g_{bb/aa}\,.
\end{eqnarray*}
The 4 terms (counted with multiplicity) with a coefficient of $\cosh^3\theta\sinh\theta$
arise from changing a single index $a\rightarrow b$ or $b\rightarrow a$; if $V_{a,b}$ has
signature $(2,0)$ or $(0,2)$, then there are
appropriate changes of sign:
\begin{eqnarray*}
T_{a,b}(\theta)\tilde g_{aa/bb}&=&
\cos^4\theta\tilde g_{aa/bb}
+\cos^3\theta\sin\theta(2\tilde g_{ab/bb}-2\tilde g_{aa/ab})\\
&+&\cos^2\theta\sin^2\theta(\tilde g_{aa/aa}+\tilde g_{bb/bb}
- 4\tilde g_{ab/ab})\\
&+&\cos\theta\sin^3\theta(2\tilde g_{ab/aa}-2\tilde g_{bb/ab})
+\sin^4\theta\tilde g_{bb/aa}\,.
\end{eqnarray*}
Suppose that $\tilde m=2$ {so indices range from $1$ to $2$.} Let
$$P=\det(L)=\tilde L_{11}\tilde L_{22}-\tilde L_{12}\tilde L_{12}\,.$$
Assume $V_{1,2}$ has signature $(1,1)$. Then:
\begin{eqnarray*}
&&T_{12}(\theta)\tilde L_{11} = \cosh^2\theta \tilde L_{11}+
\sinh^2\theta \tilde L_{22} +2\cosh\theta\sinh\theta \tilde
L_{12},\\
&&T_{12}(\theta)\tilde L_{12} = (\cosh^2\theta + \sinh^2\theta)
\tilde L_{12}+\cosh\theta\sinh\theta (\tilde L_{11} + \tilde
L_{22}),\\
&&T_{12}(\theta)\tilde L_{22} = \sinh^2\theta \tilde
L_{11}+ \cosh^2\theta \tilde L_{22} +2\cosh\theta\sinh\theta \tilde
L_{12}\,.
\end{eqnarray*}
We compute:
\begin{eqnarray*}
&&T_{12}\tilde L_{11}\tilde L_{22}=\cosh^4\theta \tilde L_{11}\tilde L_{22}
+\cosh^3\theta\sinh\theta(2\tilde L_{12}\tilde L_{22}+2\tilde L_{11}\tilde L_{12})\\
&&\quad+\cosh^2\theta\sinh^2\theta(\tilde L_{11} \tilde L_{11}+\tilde L_{22}\tilde L_{22}+4\tilde L_{12}\tilde L_{12})\\
&&\quad+\cosh\theta\sinh^3\theta(2\tilde L_{12}\tilde L_{22}+2\tilde
L_{11}\tilde L_{12})
+\sinh^4\theta\tilde L_{11}\tilde L_{22},\\
&&T_{12}\tilde L_{12}\tilde L_{12}=\cosh^4\theta \tilde L_{12}\tilde L_{12}
+\cosh^3\theta\sinh\theta(2\tilde L_{12}\tilde L_{22}+2\tilde L_{11}\tilde L_{12})\\
&&\quad+\cosh^2\theta\sinh^2\theta(2\tilde L_{12}\tilde
L_{12}+\tilde L_{11}\tilde L_{11}
+\tilde L_{22}\tilde L_{22}+2\tilde L_{11}\tilde L_{22})\\
&&\quad +\cosh\theta\sinh^3\theta(2\tilde L_{11}\tilde
L_{12}+2\tilde L_{22}\tilde L_{12})
+\sinh^4\theta\tilde L_{12}\tilde L_{12}\\
&&T_{12}(\tilde L_{11}\tilde L_{22}-\tilde L_{12}\tilde L_{12})=\cosh^4\theta(\tilde L_{11}\tilde L_{22}-\tilde L_{12}\tilde L_{12})\\
&&\quad-2\cosh^2\theta\sinh^2\theta(\tilde L_{11}\tilde L_{22}-\tilde L_{12}\tilde L_{12})
+\sinh^4\theta(\tilde L_{11}\tilde L_{22}-\tilde L_{12}\tilde L_{12})\\
&&\quad=\tilde L_{11}\tilde L_{22}-\tilde L_{12}\tilde L_{12}\,.
\end{eqnarray*}
This shows, not surprisingly, that $\det(L)$ is invariant under the action
of the orthogonal group in dimension 2.

\subsection{The exchange lemma}\label{sect-4.3}
Suppose given a monomial $C$ with
$\operatorname{deg}_a(C)=3$, $\operatorname{deg}_b(C)=1$, and
$\operatorname{deg}_c(C)=2$ for $c\ne a,b$. We suppose that the index
$a$ touches itself in $C$. There are then exactly $2$ admissible monomials $A$ and $B$ which transform
to $C$ by changing a single index $a\rightarrow b$; there are no admissible
monomials which transform to $C$ by changing a single index $b\rightarrow a$
since such a monomial would have the index $a$ appearing 4 times.
There are several possibilities:
\begin{enumerate}
\item If $C=\tilde L_{aa}A_1$, then $A=\tilde L_{ab}A_1$ and
$B=\tilde L_{aa}B_1$ where to define $B_1$,
the index $a$ appearing in $A_1$ is exchanged for the index $b$.
\item If $C=\tilde g_{aa/cd}A_1$ (resp. $\tilde g_{cd/aa}A_1$), then $A=\tilde g_{ab/cd}A_1$ (resp. $\tilde g_{cd/ab}A_1$) and
$B=\tilde g_{aa/\tilde c\tilde d}B_1$ (resp. $\tilde g_{\tilde c\tilde d/aa}B_1$) where to define $B_1$, the index $a$ appearing in $(cd)A_1$
 is exchanged for the index $b$.
\item If $C=A_1e^a\circ e^a$, then $A=A_1e^a\circ e^b$ and
$B=B_1e^a\circ e^a$ where to define $B_1$, the index $a$ appearing in $A_1$ is
exchanged for the index $b$. \end{enumerate}

The following technical Lemma appeared first in the discussion of
\cite{G73} of the heat equation proof of the Gauss-Bonnet
formula when Theorem~\ref{thm-2.3} was first proved; we modify the
proof given there to make it applicable to the present context.
\begin{lemma}
Let $P=P_{\mathcal{C}}$ be an admissible polynomial and let
$\{a,b\}$ be distinct indices. Assume that $T_{a,b}(\theta)P=P$ for all $\theta$.
Let $\{C,A,B\}$ be as above. Then $A$ is a monomial of $P$ if and only if $B$ is a monomial of $P$.
\end{lemma}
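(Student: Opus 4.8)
The plan is to track the coefficient of the ``mixed'' monomial $C$ (the one with $\deg_a=3$, $\deg_b=1$) in the expansion of $T_{a,b}(\theta)P$ and exploit that this expansion must vanish identically in $\theta$ since $T_{a,b}(\theta)P=P$ and $C$ is not admissible (so it does not occur in $P$ itself). First I would record the structure of the transformation rule established in Section~\ref{sect-4.2}: when one applies $T_{a,b}(\theta)$ to an admissible monomial and expands multilinearly, the lowest-order off-diagonal correction — the terms carrying a single factor of $\sinh\theta$ (or $\sin\theta$), i.e.\ coefficient $\cosh^{3}\theta\sinh\theta$ on a degree-four piece, or more precisely the terms linear in $\sinh\theta$ relative to the leading term — arises exactly by switching one index from $a$ to $b$ or from $b$ to $a$ in a single variable slot. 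So the coefficient of $C$ in $T_{a,b}(\theta)P$, to the relevant order in $\theta$, is a linear combination of $c(A,P)$ and $c(B,P)$ together with contributions from any admissible monomial that maps to $C$ by changing a single $b\to a$; but as noted in the Exchange Lemma's setup, no admissible monomial maps to $C$ by a single $b\to a$ move (it would force $\deg_a=4$). Hence only $A$ and $B$ contribute.

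Next I would pin down the two coefficients with which $c(A,P)$ and $c(B,P)$ enter. In each of the three cases (1)--(3) listed before the Lemma, the monomial $A$ has the index $a$ appearing in a single ``off-diagonal'' slot ($\tilde L_{ab}$, or $\tilde g_{ab/\star\star}$, etc.) while $B$ has $a$ appearing twice in a ``diagonal'' configuration ($\tilde L_{aa}$, $\tilde g_{aa/\star\star}$, or $e^a\circ e^a$), with the surplus $a$ that $A$ carries elsewhere being, in $B$, replaced by $b$. Switching the one off-diagonal $a$ in $A$ to $b$ produces $C$ with multiplicity/coefficient $1$ (in the hyperbolic case) or $\pm1$ (signature $(2,0)$ or $(0,2)$), contributing $\varepsilon_A\,c(A,P)$ with $\varepsilon_A\in\{\pm1\}$ a definite sign. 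Switching one of the two diagonal $a$'s in $B$ to $b$ produces $C$ as well, but now with multiplicity $2$ (either of the two $a$-slots can flip) and again a definite sign, contributing $2\varepsilon_B\,c(B,P)$. The signs $\varepsilon_A,\varepsilon_B$ depend only on the signature of $V_{a,b}$ and on which of the three cases we are in, and can be read off from the explicit expansions of $T_{a,b}(\theta)\tilde g_{aa/bb}$ and $T_{a,b}(\theta)\tilde L_{aa}$ displayed in Section~\ref{sect-4.2}. The key point is that both $\varepsilon_A$ and $\varepsilon_B$ are nonzero.

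Setting the coefficient of $C$ in $T_{a,b}(\theta)P$ equal to the coefficient of $C$ in $P$, which is $0$, and extracting the appropriate power of $\theta$, I obtain a relation of the shape
\begin{equation*}
\varepsilon_A\, c(A,P) + 2\,\varepsilon_B\, c(B,P) = 0 ,
\end{equation*}
with $\varepsilon_A,\varepsilon_B=\pm1$. Since neither coefficient on the left vanishes, this shows $c(A,P)=0$ if and only if $c(B,P)=0$, i.e.\ $A$ is a monomial of $P$ exactly when $B$ is, which is the assertion. I would organize the write-up by first doing the hyperbolic case ($V_{a,b}$ of signature $(1,1)$) in detail using the first displayed formula for $T_{a,b}(\theta)\tilde g_{aa/bb}$ as the template — there the relevant coefficient is $\cosh^3\theta\sinh\theta$ and every sign is $+$ — and then remarking that in the $(2,0)$ or $(0,2)$ case one replaces $\cosh,\sinh$ by $\cos,\sin$ and picks up the sign changes exhibited in the second displayed formula, none of which can cause a coefficient to drop out.

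The main obstacle I anticipate is bookkeeping rather than conceptual: one must be careful that no \emph{other} admissible monomial $A'$ of $P$ can, upon applying $T_{a,b}(\theta)$, also produce $C$ at the same order in $\theta$ — in principle a monomial with $\deg_a(A')=2,\deg_b(A')=2$ could reach $C$ by a double flip, contributing at order $\sinh^2\theta$, but that is a strictly higher order than the single-flip terms and so decouples when one extracts the correct power of $\theta$; one also has to verify the claim, implicit in the case analysis, that the pair $\{A,B\}$ is genuinely the \emph{complete} list of admissible monomials reaching $C$ by a single index change, which is exactly the combinatorial statement made in the three enumerated cases preceding the Lemma and can be checked slot-by-slot. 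Once the order-counting in $\theta$ is done cleanly, the argument reduces to the two-term linear relation above.
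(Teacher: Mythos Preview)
Your strategy matches the paper's, but two execution errors would make the written proof incorrect. First, the direction of the single-index flip is reversed throughout: since $\operatorname{deg}_a(C)=3$ while every admissible monomial has $\operatorname{deg}_a=2$, an admissible $X$ contributes to $C$ at first order only by flipping one $b$-slot to $a$; your parenthetical ``it would force $\deg_a=4$'' is the correct count, but it rules out the \emph{other} direction, namely $a\to b$. In case~(1), for instance, $A=\tilde L_{ab}A_1\to C=\tilde L_{aa}A_1$ by sending the $b$ in $\tilde L_{ab}$ to $a$; sending the $a$ there to $b$ gives $\tilde L_{bb}A_1\ne C$, and likewise switching an $a$ in $\tilde L_{aa}\subset B$ to $b$ gives $\tilde L_{ab}B_1\ne C$.

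Second, the multiplicities are not fixed at $1$ for $A$ and $2$ for $B$. With the correct flip $b\to a$, the monomial $A=\tilde L_{ab}A_1$ has its two $b$'s in $\tilde L_{ab}$ and in $A_1$, and flipping the second may or may not also give $C$ depending on the structure of $A_1$; similarly $B=\tilde L_{aa}B_1$ has both of its $b$'s in $B_1$ (not in the diagonal $\tilde L_{aa}$), and again either one or both flips land on $C$. For example, if $B_1=\tilde L_{bc}\tilde L_{db}A_2$ with $c\ne d$ then the two $b\to a$ flips give distinct monomials and the multiplicity for $B$ is $1$, not $2$. The paper accordingly writes $\sigma_1,\sigma_2\in\{1,2\}$ rather than fixing specific values; your displayed relation should read $\sigma_1\, c(A,P)+\sigma_2\, c(B,P)=0$ (up to an overall sign in the definite case), after which the conclusion follows since both $\sigma_i$ are nonzero.
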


\begin{proof}
Suppose first that $V_{a,b}$ has signature $(1,1)$.
We expand $T_{a,b}(\theta)A$ by replacing each index $a$ by  $\cosh\theta a+\sinh\theta b$
and each index $b$ by $\sinh\theta a+\cosh\theta b$ and expanding multi-linearly. To obtain $C$, we must change
an odd number of indices.
 We consider the coefficient of $\cosh^3\theta\sinh\theta$ in $c(C,T_{a,b}(\theta)A)$; this arises from changing exactly one index and
 leaving the other three indices the same. Since $\operatorname{deg}_b(C)=1$ and $\operatorname{deg}_b(A)=2$,
 the index that is being changed is $b\rightarrow a$. This may, of course, be done either in one way
 (in which case we set $\sigma_1=1$)
or two ways (in which case we set $\sigma_1=2$).  To illustrate this, we consider
the case (1) above. For example, if we have that $A=\tilde L_{aa}\tilde L_{bc}\tilde
L_{db}A_2$ for $\{a,b,c,d\}$ distinct indices, then $\sigma_1=1$
while if $A=\tilde L_{aa}\tilde L_{cb}\tilde L_{cb}A_2$ for
$\{a,b,c\}$ distinct indices, then $\sigma_1=2$. {The arguments for the
other possible cases (2) and (3) are essentially similar.

The argument given above shows that
$$c(C,T_{a,b}(\theta)A)=\sigma_1\cosh^3\theta\sinh\theta+\star\cosh\theta\sinh^3\theta\text{ for }\sigma_1\in\{1,2\}$$
and where $\star$ is a coefficient which is not of interest.
Similarly
$$c(C,T_{a,b}(\theta)B)=\sigma_2\cosh^3\theta\sinh\theta+\star\cosh\theta\sinh^3\theta\text{ for }\sigma_2\in\{1,2\}\,.$$

Suppose that $X$ is an admissible monomial so that
$X$ transforms to $C$ by changing exactly one index $a\rightarrow b$ or $b\rightarrow a$. Since $\operatorname{deg}_a(X)=2$
and $\operatorname{deg}_a(C)=3$, $X$ can not transform to $C$ by changing the index $a$ to $b$ and thus transforms to $C$
by changing the index $b$ to $a$; conversely $X$ is obtained from $C$ by changing exactly one index $a$ to $b$ and leaving
the other indices alone. Consequently $X=A$ or $X=B$ since these are the only monomials obtained in this way. Since the
number of indices which must be changed is odd, the powers of $\cosh$ and $\sinh$ are odd and we have:
$$
0=c(C,T_{a,b}(\theta)P)=
(\sigma_1c(A,P)+\sigma_2c(B,P))\cosh^3\theta\sinh\theta+\star\cosh\theta\sinh^3\theta\,.
$$
We eliminate the coefficient of $\cosh\theta\sinh^3\theta$ by examining:
$$
0=\lim_{\theta\rightarrow0}
\frac{c(C,T_{a,b}(\theta)P)}{\sinh\theta}=\sigma_1c(A,P)+\sigma_2c(B,P)\,.
$$
Since $\sigma_i\in\{1,2\}$, $c(A,P)$ is non-zero if and only if $c(B,P)$ is non-zero which proves the Lemma if
$V_{a,b}$ has signature $(1,1)$. If $V_{a,b}$ has signature $(0,2)$ or $(2,0)$, then we have similarly that:
$$
0=-(\sigma_1c(A,P)+\sigma_2c(B,P))\cos^3\theta\sin\theta+\star\cos\theta\sin^3\theta
$$
and the argument again is similar.}
\end{proof}

\subsection{The estimate of Equation~(\ref{eqn-2.b})}\label{sect-4.4}
Let $\mathcal{J}$ be the space of admissible polynomials which are
invariant under the action of the orthogonal group $\mathcal{O}$. For
$0\le k\le\tilde m-1$ and $k\equiv\tilde m-1$ mod $2$, set
$$Q_k:=\tilde L_{a_1b_1}...\tilde L_{a_kb_k}
\tilde g_{a_{k+1}b_{k+1}/a_{k+2}b_{k+2}}
...\tilde g_{a_{\tilde m-2}b_{\tilde m-2}/a_{\tilde m-1}b_{\tilde m-1}}
e^{a_{\tilde m}}\circ e^{b_{\tilde m}}
\varepsilon_{b_1...b_{\tilde m}}^{a_1...a_{\tilde m}}\,.$$
\begin{lemma}\label{lem-4.2}
The polynomials $Q_k$ for $k\equiv\tilde m-1$ mod $2$
are a basis for $\mathcal{J}$. Thus
$$\dim\{\mathcal{J}\}=\left\{\begin{array}{lll}
1+\frac{\tilde m-1}2&\text{if }\tilde m\text{ is  odd}\\
1+\frac{\tilde m}2&\text{if }\tilde m\text{ is even}\vphantom{\vrule height 11pt}
\end{array}\right\}\,.
$$
\end{lemma}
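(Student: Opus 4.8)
We must show two things: that the $Q_k$ (for $0\le k\le\tilde m-1$, $k\equiv\tilde m-1\bmod 2$) span $\mathcal{J}$, and that they are linearly independent. Linear independence is the easy half: each $Q_k$ is the unique element of $\mathcal{J}$ whose nonzero monomials all have $\operatorname{ord}_L=k$, so by the grading in Equation~(\ref{eqn-4.b}), $P=\sum_k P_k$ with $P_k\in\mathcal{J}$ forced to be a multiple of $Q_k$; and since the $\varepsilon$-contraction is a nonzero local formula (restrict to a product $N\times S^1$ as in Equation~(\ref{eqn-2.d}), or simply exhibit $\tilde L$ and $\tilde D^2\tilde g$ making $Q_k$ nonzero), independence follows. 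The real content is the spanning statement: every $\mathcal{O}$-invariant admissible polynomial of the prescribed degree type is a linear combination of the $Q_k$.

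For spanning, the strategy is the one from \cite{G73}: fix $P\in\mathcal{J}$ and, working one homogeneous piece $P_k$ at a time, use the Exchange Lemma of Section~\ref{sect-4.3} together with invariance under $\mathcal{O}$ (in particular the subgroup $\mathcal{O}_1$ and the rotations/boosts $T_{a,b}(\theta)$) to force $P_k$ into the totally antisymmetric shape that defines $Q_k$. Concretely: first apply invariance under permutations of the indices (which lie in $\mathcal{O}$) to reduce to counting orbits of admissible monomials of given $\operatorname{ord}_L=k$; then show, via the Exchange Lemma, that whenever a monomial $C$ has an index touching itself (i.e. some $\tilde L_{aa}$, $\tilde g_{aa/\star\star}$, $\tilde g_{\star\star/aa}$, or $e^a\circ e^a$ factor), the coefficients of the two ``partner'' admissible monomials $A$ and $B$ obtained by the index-exchange are proportional with nonzero ratio. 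Iterating this exchange relation propagates a single coefficient through all admissible monomials connected by such moves, and the signs that appear (the $-$ signs in the $(2,0)$/$(0,2)$ boost formulas displayed after Equation~(\ref{eqn-4.a})) are exactly those that assemble a determinant/$\varepsilon$-tensor. The upshot is that each $P_k$ is determined by one scalar, namely its multiple of $Q_k$.

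**The main obstacle.** The delicate point is establishing the ``connectivity'': that the graph on admissible monomials of fixed $\operatorname{ord}_L=k$, with edges given by the legal exchange moves of the Exchange Lemma, is connected, so that a single proportionality constant controls all of $P_k$. This requires a careful combinatorial argument that any admissible monomial can be moved, via moves each of which temporarily creates one index of degree $3$ and one of degree $1$ while keeping all others at degree $2$, to a fixed normal form — and one must check that the moves never leave the class of polynomials to which the Exchange Lemma applies ($T_{a,b}(\theta)$-invariance of $P$ is available for every pair, so this is automatic, but the bookkeeping of which variable ``touches itself'' at each stage needs care, especially distinguishing the $\tilde g_{aa/cd}$ versus $\tilde g_{cd/aa}$ and the $e^a\circ e^a$ cases in the three bullet cases of Section~\ref{sect-4.3}). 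A secondary nuisance is signature: one must run the argument uniformly whether a given $V_{a,b}$ is definite or Lorentzian, but the Exchange Lemma was already proved in both cases, so this only affects signs, which are precisely absorbed into the antisymmetrization defining $\varepsilon$.

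Finally, with spanning and independence in hand, the dimension count is immediate: the admissible values of $k$ are $k\in\{\tilde m-1,\tilde m-3,\dots\}$ down to $1$ if $\tilde m$ is even or $0$ if $\tilde m$ is odd, giving $\lceil \tilde m/2\rceil = 1+\lfloor(\tilde m-1)/2\rfloor$ values, which is $1+\frac{\tilde m-1}{2}$ for $\tilde m$ odd and $1+\frac{\tilde m}{2}$ for $\tilde m$ even (the even case also includes $k=0$, the pure $\tilde D^2\tilde g$ term), matching Equation~(\ref{eqn-2.c}) and hence completing the proof of Theorem~\ref{thm-2.4} via Equation~(\ref{eqn-2.d}).
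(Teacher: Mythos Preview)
Your proposal follows the same overall strategy as the paper: decompose $P=\sum_kP_k$ by $\operatorname{ord}_L$, observe each $P_k$ is separately $\mathcal{O}$-invariant, and use the Exchange Lemma to argue $\dim\mathcal{J}_k\le1$. The linear independence argument and the final dimension count are fine.

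The gap is exactly where you say it is: you identify the connectivity of the exchange graph as the ``main obstacle'' and then do not actually resolve it. The paper's argument bypasses the need to prove full connectivity of this graph. Instead of showing every admissible monomial is linked to every other by exchange moves, it shows only that every nonzero $P_k$ must contain the single specific monomial
\[
A_k:=\tilde L_{11}\cdots\tilde L_{kk}\,\tilde g_{k+1,k+1/k+2,k+2}\cdots\tilde g_{\tilde m-2,\tilde m-2/\tilde m-1,\tilde m-1}\,e^{\tilde m}\circ e^{\tilde m},
\]
which already gives $\dim\mathcal{J}_k\le1$ since $P_k\mapsto c(A_k,P_k)$ is then injective. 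The mechanism is a recursive peeling rather than a graph argument: use the Exchange Lemma twice (once to make $a_1=b_1$, once to move that repeated index to the slot $1$) to find a monomial of $P_k$ divisible by $\tilde L_{11}$; write $P_k=\tilde L_{11}P_{k,1}+\tilde P_{k,1}$; observe $P_{k,1}\ne0$ is invariant under the stabilizer $\mathcal{O}_2$ of $e_1$; and recurse down the chain $\mathcal{O}=\mathcal{O}_1\supset\mathcal{O}_2\supset\cdots$. This is concrete and short, and it is the missing idea in your sketch.

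A secondary point: you spend effort tracking signs so as to reconstruct the $\varepsilon$-tensor directly. This is unnecessary. The paper never tracks signs; the Exchange Lemma is used only qualitatively (nonzero iff nonzero), which suffices for $\dim\mathcal{J}_k\le1$. Combined with the evident fact that $Q_k\in\mathcal{J}_k$ is nonzero (the coefficient of $A_k$ in $Q_k$ is $\pm1$), this yields the basis statement without any sign bookkeeping.
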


\begin{proof} Let $P\in\mathcal{J}$. Adopt the notation of Equation~(\ref{eqn-4.b})
to express $P=\sum_kP_k$.
Then the $P_k$ are each invariant under the action of $\mathcal{O}$ separately.
Let $\mathcal{J}_k$ be the span of such polynomials;
$\mathcal{J}=\oplus_k\mathcal{J}_k$. We will
complete the proof by showing $\dim\{\mathcal{J}_k\}\le 1$.
Let $0\ne P_k\in\mathcal{J}_k$. Let
$$A=\tilde L_{a_1b_1}...\tilde L_{a_kb_k}
\tilde g_{a_{k+1}b_{k+1}/a_{k+2}b_{k+2}} ...\tilde g_{a_{\tilde m-2}b_{\tilde
m-2}/a_{\tilde m-1}b_{\tilde m-1}} e^{a_{\tilde m}}\circ
e^{b_{\tilde m}}$$ be a monomial of $P$. We can apply the exchange
Lemma to assume $a_1=b_1$. If $a_1=1$, fine. Otherwise, we can apply
the exchange Lemma to assume $b_1=1$ and then apply the exchange
Lemma again to construct a monomial $A_1$ of $P$ so that
$a_1=b_1=1$. Decompose $$P_k=\tilde L_{11}P_{k,1}+\tilde P_{k,1}$$
where $\tilde P_{k,1}:=P_k-\tilde L_{11}P_{k,1}$ and where
$$
P_{k,1}=\frac1{\tilde L_{11}}\sum_{B\in\mathcal{A}:\tilde L_{11}\text{ divides }B}c(B,P)\cdot B\,.
$$
Then $0\ne P_{k,1}$ and $P_{k,1}$ is invariant under the action $\mathcal{O}_2$
since we have fixed the index `$1$'. Thus, in particular, $P_{k,1}$
is invariant under the action of $T_{a,b}(\theta)$ for $2\le a<b\le\tilde m$.
We can then apply the exchange Lemma
to show that $\tilde L_{22}$ divides some monomial of $P_{k,1}$. We continue
in this fashion to construct an admissible polynomial $0\ne P_{k,k}$ which
is invariant under the action $\mathcal{O}_{k+1}$ and
so that $P_{k,k}$ is divisible by $\tilde L_{11}...\tilde L_{kk}$. We express
$$P_{k,k}=\tilde L_{11}...\tilde L_{kk}\tilde g_{a_{k+1}b_{k+1}/a_{k+2}b_{k+2}}...
\tilde g_{a_{\tilde m-2}b_{\tilde m-2}/a_{\tilde m-1}b_{\tilde m-1}}e^{a_{\tilde m}}
\circ e^{b_{\tilde m}}\,.$$
Of course, if $k=0$, then $P_{k,k}=P$ where as if $k=\tilde m-1$, then
we shall not proceed further.

If $k<\tilde m-1$, we apply the exchange Lemma twice to choose
a monomial so $a_{k+1}=b_{k+1}=k+1$ and continue in this fashion
finally to show that
$$\tilde L_{11}...\tilde L_{kk}\tilde g_{k+1,k+1/k+2,k+2}...
\tilde g_{\tilde m-2,\tilde m-2/\tilde m-1,\tilde m-1}e^{a_{\tilde m}}\circ
e^{b^{\tilde m}}$$ is a monomial of $P$. Since the index
$\tilde m$ can only appear in $\{a_{\tilde m},b_{\tilde m}\}$ we
have $a_{\tilde m}=b_{\tilde m}=\tilde m$. We conclude therefore
that
$$A_k:=\tilde L_{11}...\tilde L_{kk}\tilde g_{k+1,k+1/k+2,k+2}...
\tilde g_{\tilde m-2,\tilde m-2/\tilde m-1,\tilde m-1}e^{\tilde m}\circ e^{\tilde m}$$
is a monomial of $P_k$. We summarize. If $0\ne P_k\in\mathcal{J}_k$, then
$c(A_k,P_k)\ne0$. This shows that $\dim\{\mathcal{J}_k\}\le1$.\end{proof}

\begin{remark}\rm
 H. Weyl's second theorem of invariants states that all relations
amongst orthogonal linear invariants arising from contractions of
indices can be constructed using the tensor  $\varepsilon$ described
in Equation~(\ref{eqn-1.a}). When passing from the case of general
orthogonal linear invariants to the case of universal curvature
invariants, one must also include the curvature symmetries of
Equation~(\ref{eqn-2.a}). This plays a crucial role in the proof of
Theorem~\ref{thm-2.3} given in \cite{GPS11,GPS12}. However, rather
than using H. Weyl's theorems directly (which would necessitate
extending the discussion to include covariant derivatives of the
second fundamental form), we have chosen to give a proof of
Theorem~\ref{thm-1.4} based on the analysis of \cite{G73}, which was
first developed to give a heat equation proof of the
Gauss-Bonnet theorem.
\end{remark}

\section*{Acknowledgments}
Research of P. Gilkey partially supported by project MTM2009-07756
(Spain).  Research of {J.H.} Park and K. Sekigawa was supported by
the National Research Foundation of Korea (NRF) grant funded by the
Korea government (MEST) (2012-0005282).

\end{document}